\newcommand{\PG}{{\rm PG}}
\newcommand{\LL}{{\cal L}}
\newcommand{\WW}{{\cal W}}
\newtheorem{Th}{Theorem}[section]
\newtheorem{Le}[Th]{Lemma}
\newtheorem{Co}[Th]{Corollary}
\newtheorem{Ex}[Th]{Example}
\newtheorem{remarks}[Th]{Remarks}
\newtheorem{remark}[Th]{Remark}
\newcommand{\gauss}[2]{{#1\brack #2}}
\title{Intriguing sets in distance regular graphs}
\author{Stefaan De Winter\thanks{ Department of Mathematical Sciences, Michigan Technological University, MI 49931, USA}\and Klaus Metsch\thanks{Justus-Liebig-Universit\"{a}t, Mathematisches Institut, Arndtstra{\ss}e 2, D-35392 Gie{\ss}en}
}
\date{\today}
\begin{document}

\thispagestyle{empty}
\parindent0ex
\parskip2ex

\maketitle

\begin{abstract}
We construct an infinite family of intriguing sets that are not tight in the Grassmann Graph of planes of $\PG(n,q)$, $n\ge 5$ odd, and show that the members of the family are the smallest possible examples if $n\ge 9$ or $q\ge 25$.
\end{abstract}

{\bf Keywords:} intriguing set, projective space, polar space.

{\bf MSC:} 05B25, 51E20, 05E30

\section{Introduction}

Let $\Gamma=(X,E)$ be a connected regular graph of valency $k\ge 1$. Then $k$ is an eigenvalue of $\Gamma$, which will be called its \textsl{trivial eigenvalue}. Following De Bruyn and Suzuki \cite{DeBruyn} we call a set $Y$ of vertices of $\Gamma$ \textsl{ intriguing} if there are integers $x$ and $x'$ such that every vertex of $Y$ is adjacent to $x'$ vertices of $Y$ and every vertex of $X\setminus Y$ is adjacent to $x$ vertices of $Y$. If in addition $Y\not=\emptyset,X$, then $Y$ is called a non-trivial intriguing set. It can be easily seen that a set $Y$ with $Y\not=\emptyset,X$ is intriguing if and only if its characteristic vector lies in the span of the all-one vector and an eigenvector $\lambda$ of $\Gamma$, see for example Proposition 3.3 in \cite{DeBruyn}. In this case $x'-x=\lambda$. If $\lambda$ is the largest or smallest non-trivial eigenvalue of $\Gamma$, then $Y$ is called a \textsl{tight} set.

This definition of intriguing set is the culmination of almost three decades of research on a variety of topics, at first sometimes seemingly unrelated, that share, when formulated well, the property of a set $X$ and $Y$ as above. Included objects are Cameron-Liebler-line classes, introduced by Cameron and Liebler in 1982; tight sets of generalized quadrangles, introduced by Payne in 1987; $m$-ovoids of generalized quadrangles, introduced by Thas in 1989. The first unification of these objects was started by Bamberg, Law and Penttila \cite{Bamberg_tight_quadrangle} in 2009 (but the paper was submitted in 2004) where they introduce the concept of an intriguing set in a generalized quadrangle. Shortly thereafter  Bamberg, Kelly, Law and Penttila further generalized this from generalized quadrangles to polar spaces \cite{Bamberg_tight_polar}. Finally De Bruyn and Suzuki \cite{DeBruyn} provided the generalization to intriguing sets of graphs. Tight sets and intriguing sets have received a lot of attention in recent years; see for example \cite{weighted_intriguing, Cossidente_intriguing, DeBeule_Metsch_tight, Nakic_Storme}. 

We want to make the reader aware of the fact that the terminology of tight set in the previous paragraph, as used in \cite{Bamberg_tight_quadrangle,Bamberg_tight_polar}, does not exactly coincide with the definition from \cite{DeBruyn} which we will be using in this paper. See also page two of \cite{DeBruyn} for a comment on how the definition of tight set has changed from \cite{Bamberg_tight_quadrangle,Bamberg_tight_polar} to \cite{DeBruyn}.

Many examples of tight set are known, but the paper \cite{DeBruyn} does not mention any infinite family of non-trivial intriguing sets that are not tight. In this paper we will construct such a family in the Grassmann graph of planes of $\PG(n,q)$, $n\ge 5$ odd, which is a distance regular graph of diameter $3$. We also show for all odd $n\ge 9$ that our example is the unique smallest example in the Grassmann graph. For $n=7$ we conjecture that this is also true, but we are able to show this only if the order $q$ of $\PG(q,7)$ is at least $13$. For $n=5$, the Grassmann graph of planes of $\PG(n,q)$ has a smaller intriguing set that is not tight, which consists of the planes of an embedded symplectic polar space. We show for $n=5$ and $q\ge 25$ that there do not exist smaller examples, however, we are not able to exclude other intriguing sets of the same cardinality.

\begin{Th}\label{Main}
Let $\LL$ be an intriguing set in the Grassmann graph of planes of $\PG(n,q)$, $n\ge 5$, and suppose that $\LL$ is not tight. Then the following results hold.
\begin{enumerate}
\renewcommand{\labelenumi}{\rm(\alph{enumi})}
\item If $n\ge 9$ is odd, then $$|\LL|\ge \frac {(q^{n+1}-1)(q^{n-1}-1)}{(q-1)(q^2-1)}$$ with equality if and only if $\LL$ consists of all planes of $\PG(n,q)$ that contain a line of a given line spread of $\PG(n,q)$. The same results holds for $n=7$ if $q\ge13$.
\item If $n=5$ and $q\ge 25$, then $|\LL|\ge (q+1)(q^2+1)(q^3+1)$ and equality can be achieved by the set of planes of a symplectic polar space $W(5,q)$ embedded in $\PG(5,q)$.
\end{enumerate}
\end{Th}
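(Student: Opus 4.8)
The plan is to exploit that the Grassmann graph $\Gamma$ of planes of $\PG(n,q)$ has diameter $3$ and hence exactly four eigenvalues $\lambda_0>\lambda_1>\lambda_2>\lambda_3$, where $\lambda_0$ is the valency. Since $\LL$ is intriguing but not tight, its characteristic vector $f$ lies in $V_0\oplus V_2$, where $V_0$ is spanned by the all-one vector and $V_2$ is the $\lambda_2$-eigenspace; thus $f=\tfrac{|\LL|}{N}\mathbf 1+f_2$ with $f_2\in V_2$ and $\|f_2\|^2=|\LL|\bigl(1-|\LL|/N\bigr)$, where $N=\gauss{n+1}{3}$ is the total number of planes.

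First I would convert the eigenspace condition into design equations via the standard filtration of the Grassmann scheme: the incidence vector of the planes through a fixed point, and the incidence vector of the planes inside a fixed solid, both lie in $V_0\oplus V_1$ and are therefore orthogonal to $V_2$. Taking inner products with $f$ shows that every point of $\PG(n,q)$ lies on the same number $c_1=|\LL|\,(q^3-1)/(q^{n+1}-1)$ of planes of $\LL$ (so in particular $\LL$ covers every point), and every solid contains the same number of planes of $\LL$. By contrast, the incidence vectors of the planes through a fixed line and of the planes inside a fixed $4$-space lie in $V_0\oplus V_1\oplus V_2$ and genuinely detect $f_2$; these are the structures that carry all the freedom in $\LL$.

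The quantitative heart is a second-moment computation for the line weights $a_\ell:=|\{\pi\in\LL:\ell\subset\pi\}|$. Writing $B$ for the line--plane incidence matrix, one has $B^{\top}B=(q^2+q+1)I+A_1$, where $A_1$ is the adjacency matrix of $\Gamma$; applying this to $f$ and using $A_1f=\lambda_0\tfrac{|\LL|}{N}\mathbf 1+\lambda_2 f_2$ yields exact closed forms for $\sum_\ell a_\ell$ and $\sum_\ell a_\ell^2$ in terms of $|\LL|$, $N$ and $\lambda_2$. Combining these moments with the constant point degree and the a priori bounds $0\le a_\ell\le \frac{q^{n-1}-1}{q-1}$, I would extract the lower bound on $|\LL|$, and, in the equality case, force the distribution of the $a_\ell$ to be two-valued, taking only the values $0$ and the maximum $\frac{q^{n-1}-1}{q-1}$. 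A line of maximal weight is contained in every plane of $\LL$ passing through it; I would then show that these \emph{full} lines pairwise do not meet, cover every point (using the point design), and hence constitute a line spread, with $\LL$ equal to the set of all planes meeting the spread. Part (b) runs along the same lines, except that for $n=5$ the incidence vectors of the planes inside a hyperplane also lie in $V_0\oplus V_1\oplus V_2$, the relevant extremal configuration is the generator set of an embedded $\w(5,q)$ rather than a spread, and the threshold $q\ge 25$ is what keeps the counting inequalities in force.

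The main obstacle is this rigidity step: passing from the moment (in)equalities to the exact geometric conclusion. The moments alone control $|\LL|$ only up to lower-order error terms, and ruling out near-extremal configurations with intermediate line weights --- as well as verifying that the full lines really tile the point set into a spread (respectively into the generators of a symplectic space) --- requires a careful, dimension-sensitive estimate. This is precisely where the hypotheses $n\ge 9$ (or $n=7$, $q\ge 13$; or $n=5$, $q\ge 25$) enter: for small $n$ the error terms in the counting degrade, so that $q$ must be taken large enough to guarantee that no competing configuration of the same or smaller cardinality can occur.
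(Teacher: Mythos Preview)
Your setup (the eigenspace decomposition, the point-design equation, and the second-moment identity $\sum_\ell a_\ell^2=|\LL|(\theta_2+\lambda_2+x)$ coming from $B^\top B=\theta_2 I+A_1$) is correct and is a natural starting point, but the proposal contains one factual error and one genuine gap.

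\textbf{The error.} The characteristic vector of the planes contained in a fixed solid does \emph{not} lie in $V_0\oplus V_1$. In the Grassmann scheme $J_q(n+1,3)$ the span of the vectors ``planes inside a fixed $(n{+}1{-}j)$-dimensional subspace'' is $V_0\oplus\cdots\oplus V_{\min(j,3)}$; for a solid one has $j=n-3\ge 2$, so the $V_2$-component is present and the number of planes of $\LL$ in a solid is \emph{not} constant. (The correct dual of the point statement is that every \emph{hyperplane} contains a constant number of planes of $\LL$, which is what the paper proves in Lemma~\ref{incidenceswithpointsandhyperplanes}.) The same mistake recurs for $4$-spaces. This matters: for $n=5$ the paper's entire argument turns on how the number of $\LL$-planes in a solid varies.

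\textbf{The gap.} A bare second-moment argument cannot deliver the sharp bound or the equality characterisation. With the a priori bound $a_\ell\le \theta_{n-2}$, the inequality $\sum_\ell a_\ell(\theta_{n-2}-a_\ell)\ge 0$ collapses to $x+\lambda_2\le(\theta_{n-2}-1)\theta_2=\lambda_0$, which is trivial; and at the conjectured extremum $x=(q+1)\theta_2$ one has $\sum a_\ell^2<\theta_{n-2}\sum a_\ell$ strictly, so the moments do \emph{not} force $a_\ell\in\{0,\theta_{n-2}\}$. To get leverage you need the sharper bound $a_\ell\le x$, valid only for \emph{non-full} lines --- and invoking it forces exactly the dichotomy the paper uses. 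Either some plane $\pi\in\LL$ has no full line, in which case pigeonholing $x+\lambda_2$ adjacencies over the $\theta_2$ lines of $\pi$ against the cap $a_\ell\le x$ gives $x-1\ge(\lambda_2+1)/(q^2+q)$ (the paper's Lemma~3.3); or every plane of $\LL$ contains a full line, and then a short count (Lemma~\ref{linespread}) forces the full lines to be a spread and $x=(q+1)\theta_2$. For $n\ge 9$ the first branch already gives $x>(q+1)\theta_2$, so one is done; but for $n=7$ it only yields $x\ge q^3+q$, and for $n=5$ even less. There the paper needs substantially different and much longer arguments --- a ``richness'' hierarchy of solids, $4$-spaces and $5$-spaces for $n=7$ (Section~\ref{Sectionnis7}), and a delicate analysis of the maximum number $\beta$ of $\LL$-planes through a line or in a solid for $n=5$ (Section~\ref{Sectionnis5}) --- none of which is visible from the global moments. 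Your plan would need to supply these structural pieces; the moment identities alone do not carry you past $n\ge 9$.
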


For $n\ge 6$ even, we can prove better bounds but we do not have examples meeting the bound. In fact, we do not know of any intriguing set of the Grassmann graph of planes of $\PG(n,q)$, $n$ even, that is not tight.

\begin{Th}
Let $\LL$ be an intriguing set in the Grassmann graph of planes of $\PG(n,q)$ and suppose that $\LL$ is not tight. If $n=6$, then $|\LL|\ge (q+1)^2(q^3-1)/(q-1)$, and if $n\ge 8$, then $\LL\ge q(q^{n-2}-1)/(q^2-1)$.
\end{Th}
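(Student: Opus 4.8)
The plan is to analyze the eigenvalue structure of the Grassmann graph and use the fact that a non-trivial intriguing set corresponds to an eigenvector for one of the non-trivial eigenvalues. Since the intriguing set is not tight, its associated eigenvalue $\lambda$ is neither the largest nor the smallest non-trivial eigenvalue of the graph. I would first write down the eigenvalues of the Grassmann graph of planes of $\PG(n,q)$, which is distance regular of diameter $3$; these are indexed values of the form $q^{j+1}\gauss{3-j}{1}\gauss{n-2-j}{1} - \gauss{j}{1}$ for appropriate $j$, and pin down exactly which eigenvalue is the smallest non-trivial one. The characteristic vector $\chi$ of $\LL$ lies in $\langle \mathbf{1}\rangle \oplus V_\lambda$, where $V_\lambda$ is the eigenspace for a non-extremal $\lambda$, and the cardinality is recovered from the inner product $\chi\cdot\mathbf{1}$.

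Next I would set up the counting identity that bounds the size from below. Writing $\chi = \frac{|\LL|}{|X|}\mathbf{1} + v$ with $v\in V_\lambda$, one computes $x' - x = \lambda$ and relates $|\LL|$, $x$, and $x'$ through the standard double-count of edges between $\LL$ and its complement together with $\chi\cdot\chi = |\LL|$. This yields $|\LL| = \frac{|X|\,(x - \text{something})}{\dots}$ in terms of the parameters $x, x'$ and $\lambda$; the essential point is that $x$ and $x'$ are nonnegative integers, and $x'-x = \lambda$ is fixed once we know which eigenvalue we are on. Minimizing $|\LL|$ over a non-trivial intriguing set that is not tight therefore reduces to minimizing over the admissible integer values of $x$ (equivalently $x'$) for each non-extremal eigenvalue $\lambda$, subject to $\LL\neq\emptyset$. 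Since $\LL$ is non-trivial we have $x\ge 1$ (every external vertex meets $\LL$) or the symmetric condition, forcing the smallest cases.

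For the two regimes I would then carry out the arithmetic separately. For $n=6$ I expect the relevant non-tight eigenvalue to give a bound that simplifies to $(q+1)^2(q^3-1)/(q-1)$ after substituting the explicit Grassmann parameters into the size formula and taking the minimal admissible $x$. For $n\ge 8$ the analogous substitution, using the corresponding non-extremal eigenvalue, should collapse to $q(q^{n-2}-1)/(q^2-1)$; here I would be careful to identify which intermediate eigenvalue dominates, since for larger $n$ there are several non-extremal eigenvalues and the minimum over all of them must be taken. The growth of $|X|=\gauss{n+1}{3}$ with $n$ means the eigenvalue that is \emph{closest} to the trivial eigenvalue (largest in absolute terms but non-extremal) typically produces the smallest set, so I would verify that claim and discard the others.

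The main obstacle will be the bookkeeping in comparing the several non-extremal eigenvalues for $n\ge 8$ and proving rigorously that the asserted value is genuinely the minimum across all of them, rather than merely the minimum for one chosen eigenvalue. A secondary difficulty is establishing that the smallest admissible integer parameter $x$ is actually attainable in the inequality chain, i.e.\ that the integrality and nonnegativity constraints on $x$ and $x'$ do not allow a smaller value through some unexpected eigenvalue. Unlike part (a) of Theorem~\ref{Main}, no matching construction is claimed here, so the argument is purely a lower bound and I would not need to exhibit an extremal configuration; this simplifies the even case considerably, since I only need the inequality and not an equality characterization.
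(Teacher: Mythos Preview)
Your proposal has two problems, one minor and one fatal.

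The minor one: you write that ``for larger $n$ there are several non-extremal eigenvalues and the minimum over all of them must be taken.'' This is not so. The Grassmann graph of planes of $\PG(n,q)$ has diameter~$3$ for every $n\ge 5$, hence exactly three non-trivial eigenvalues $\lambda_1>\lambda_2>\lambda_3$, and a non-tight intriguing set is forced to live on the single middle eigenvalue $\lambda_2$. There is nothing to compare here, and the formula $|\LL|=\dfrac{\theta_n\theta_{n-2}}{(q+1)^2\theta_2}\,x$ shows that bounding $|\LL|$ from below is exactly the same as bounding the nonnegative integer $x$ from below.

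The fatal one concerns $n\ge 8$. Your plan is to extract the bound purely from the eigenvalue relation together with the integrality and nonnegativity of $x$ and $x'$. That will not reach the stated bound. Integrality alone (via the number of planes of $\LL$ through a point or in a hyperplane) only forces $(q+1)^2\mid x$ when $n$ is even, and additionally $\theta_2\mid x$ when $3\mid n$; for $n=8$ this gives merely $x\ge (q+1)^2$, far short of $q(q^{n-3}-1)/(q^2-1)\sim q^{4}$. The paper's argument for $n\ge 8$ is genuinely geometric: one shows (i) if some plane of $\LL$ has every line lying also in a plane outside $\LL$, then an elementary double count through that line forces $x-1\ge(\lambda_2+1)/(q^2+q)$; and (ii) if, on the contrary, every plane of $\LL$ contains a ``full'' line (a line all of whose planes are in $\LL$), then those full lines form a line spread and in particular $n$ must be odd. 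For even $n$ case~(ii) is impossible, so case~(i) applies and yields the asserted bound on $x$. None of this is visible from the spectral decomposition alone; without the full-line/spread dichotomy you cannot close the gap.

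For $n=6$ your approach is essentially right and matches the paper: there the bound comes entirely from divisibility ($3\mid n$ gives $\theta_2\mid x$, $n$ even gives $(q+1)^2\mid x$, and $\gcd(\theta_2,(q+1)^2)=1$), so $x\ge (q+1)^2\theta_2=(q+1)^2(q^3-1)/(q-1)$.
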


Throughout the paper we will use the following notation. For integers $n\ge k\ge 0$ the Gaussian coefficients are
\[
\gauss{n}{k}_q:=\prod_{i=1}^k\frac{q^{n+1-i}-1}{q^i-1}.
\]
Usually we will omit the subscript $q$ and simply write $\gauss{n}{k}$. We also define $\theta_n:=\gauss{n+1}{1}$ for all integers $n\ge 1$.

The paper is structured as follows: in Section 2 we provide examples of intriguing sets that are of interest to our results, in Section 3 we prove Theorem \ref{Main} for $n\ge 9$ as well as Theorem 1.2, in Section 4 we prove Theorem \ref{Main} for $n=5$, and finally in Section 5 we present the proof of Theorem \ref{Main} for $n=7$.

\section{Examples}

First we give examples of intriguing sets in the Grassmann graph of planes of $\PG(n,q)$, $n\ge 5$. This is a distance regular graph of diameter three and its three non-trivial eigenvalues are (Theorem 9.3.3 in \cite{Brouwer})
\[
\lambda_1=\frac{q^2(q+1)(q^{n-3}-1)}{q-1},\  \lambda_2=\frac{q^{n-1}-q^3}{q-1}-q-1,\ \lambda_3=-(q^2+q+1).
\]
Recall that a set $\LL$ of planes of $\PG(n,q)$ is an intriguing set of the Grassmann graph, if every plane of $\LL$ meets the same number $x'$ of planes of $\LL$ in a line and every plane not in $\LL$ meets the same number $x$ of planes of $\LL$ in a line. In this case $x'-x$ is one of the eigenvalues $\lambda_1$, $\lambda_2$ and $\lambda_3$ \cite[Proposition 3.7]{DeBruyn}. Thus $x'=x+\lambda_i$ for some $i\in\{1,2,3\}$. Counting pairs $(\pi,\tau)$ of planes that intersect in a line and such that $\tau\in\LL$ gives $|\LL|x'+(\gauss{n}{3}-|\LL|)x=|\LL|\theta_2(\theta_{n-2}-1)$, so that $|\LL|$ is uniquely determined by $x$ and the eigenvalue $\lambda_i=x'-x$. For the eigenvalue $\lambda_2$ we have
\[
|\LL|=\frac{(q^{n+1}-1)(q^{n-1}-1)}{(q+1)(q^2-1)(q^3-1)}\cdot x\ .
\]

\begin{Ex}\label{Example_point}
If $P$ is a point of $\PG(n,q)$, then the set consisting of all planes of $\PG(n,q)$ on $P$, $n\ge 5$, is a tight set with $x=\theta_2$ and $x'-x=\lambda_1$.
\end{Ex}

\begin{Ex}\label{Example_hyperplane}
If $H$ is a hyperplane of $\PG(n,q)$, then the set consisting of all planes of $H$ is a tight set of $\PG(n,q)$ with $x=\theta_{n-3}$ and $x'-x=\lambda_1$.
\end{Ex}

\begin{Ex}\label{ex2}
Consider a line spread $S$ of $\PG(n,q)$, $n\ge 5$ odd. The set $\LL$ of all planes of $\PG(n,q)$ that contain a line of $S$ is an intriguing set with $x=(q+1)(q^2+q+1)$ and $x'-x=\lambda_2$.
\end{Ex}

\begin{Ex}\label{ex1}
Consider the symplectic polar space consisting of the totally isotropic subspaces of a symplectic polarity of $\PG(5,q)$, and let $\LL$ be the set of planes of $W(5,q)$. This is an intriguing set of the Grassmann graph of planes of $\PG(5,q)$ with $x:=(q+1)^2$ and $x'-x=\lambda_2=q^3-q-1$. Also \[
|\LL|=(q^3+1)(q^2+1)(q+1)=x(q+1)(q^2-q+1).
\]
\end{Ex}

To end this section, we give an example of intriguing sets that are not tight in a dual polar graph.

\begin{Ex}
Let $\Gamma$ be the dual polar graph of $W(5,q)$, that is the vertices are the planes of $W(5,q)$ with two vertices adjacent if the corresponding planes meet in a line. Suppose that $S$ is a line spread of $W(5,q)$, that is, a set of lines of $W(5,q)$ covering every point exactly once. The set $\LL$ of all planes of $W(5,q)$ that contain a line of $S$ is an intriguing set with $x:=q^2+q+1$ and $x'-x=-1$. Also
\[
|\LL|=(q^4+q^2+1)(q+1)=x(q+1)(q^2-q+1).
\]
The graph $\Gamma$ is distance regular of diameter three and $-1$ is its second smallest eigenvalue. Hence $\LL$ is not a tight set.
\end{Ex}

\begin{remarks}
\begin{enumerate}
\renewcommand{\labelenumi}{\rm(\alph{enumi})}
\item We do not know in general of the existence of a line spread of $W(5,q)$. Hering \cite{Hering} constructed two line spreads of $W(5,3)$; see also Buekenhout \cite{Buekenhout}.  John Bamberg informed us that $W(5,q)$ in fact has over a thousand projectively inequivalent line spreads for $q=3$ (computer search). However we do not know of any example for other values of $q$. 
\item Notice that a plane spread of $W(5,q)$ is also an intriguing set of the dual polar graph of $W(5,q)$, but this corresponds to the smallest eigenvalue, so a spread is tight.
\end{enumerate}
\end{remarks}

\section{General remarks on intriguing sets in the Grassmannian of planes}

Let $\LL$ be an intriguing set in the Grassmann graph of planes of $\PG(n,q)$ corresponding to the eigenvalue
\[
\lambda_2:=\theta_{n-5}q^3-q-1=\theta_{n-2}-1-(q+1)^2.
\]
Thus for some integer $x>0$ we have that each plane that is not in $\LL$ meets $x$ planes of $\LL$ in a line and each plane of $\LL$ meets $x':=\lambda_2+x$ planes of $\LL$ in a line. If $n$ is odd, then an example with $x=(q+1)\theta_2$ is given in Example \ref{ex2}. We will show for odd $n\ge 9$, that this is the smallest possible value for $x$. For $n=7$ we will be able to prove this provided $q\ge13$. For $n=5$ we have seen that $x=(q+1)^2$ is possible (Example \ref{ex1}) and we shall also see that this is optimal. The cases $q=5$ and $7$ will be dealt with in the next two sections.

\begin{Le}\label{incidenceswithpointsandhyperplanes}
\begin{enumerate}
\renewcommand{\labelenumi}{\rm(\alph{enumi})}
\item We have
\begin{eqnarray}\label{eqn_L_general}
|\LL|=\frac{\theta_n\theta_{n-2}}{(q+1)^2\theta_2}x,
\end{eqnarray}
\item Every point is contained in $|\LL|\theta_2/\theta_n$ planes of $\LL$.
\item Every hyperplane contains $|\LL|\theta_{n-3}/\theta_n$ planes of $\LL$.
\end{enumerate}
\end{Le}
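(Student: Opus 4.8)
The plan is to treat part (a) as a restatement of a count already performed in Section~2, and to derive parts (b) and (c) from the position of the characteristic vector of $\LL$ in the eigenspace decomposition of the Grassmann graph. For (a), recall from Section~2 that an intriguing set belonging to $\lambda_2$ satisfies $|\LL|=\frac{(q^{n+1}-1)(q^{n-1}-1)}{(q+1)(q^2-1)(q^3-1)}\,x$; substituting $q^{m}-1=(q-1)\theta_{m-1}$ and $q^2-1=(q-1)(q+1)$ turns the right-hand side into $\frac{\theta_n\theta_{n-2}}{(q+1)^2\theta_2}\,x$, so (a) needs nothing beyond $q$-binomial bookkeeping.

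For (b) and (c) I would work in the real vector space of functions on the planes, writing $j$ for the all-one vector and $\chi$ for the characteristic vector of $\LL$. Let $V_0=\langle j\rangle, V_1, V_2, V_3$ be the eigenspaces of the Grassmann graph for the eigenvalues $\theta_2(\theta_{n-2}-1), \lambda_1, \lambda_2, \lambda_3$; these are pairwise orthogonal because the adjacency matrix is symmetric. Since $\LL$ is intriguing for $\lambda_2$, its characteristic vector lies in $V_0\oplus V_2$, so $\chi=\frac{|\LL|}{\gauss{n+1}{3}}\,j+v$ with $v\in V_2$. For a point $P$ let $u_P$ be the indicator of the planes through $P$, and for a hyperplane $H$ let $w_H$ be the indicator of the planes inside $H$; then the numbers to be computed in (b) and (c) are $\langle u_P,\chi\rangle$ and $\langle w_H,\chi\rangle$.

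The crux is the claim that $u_P$ and $w_H$ lie in $V_0\oplus V_1$ and hence are orthogonal to $v\in V_2$. For $u_P$ this is the standard fact that the ``up'' map carrying a function on points to its sum over the points of each plane is $\mathrm{GL}(n+1,q)$-equivariant and injective with image $V_0\oplus V_1$ (equivalently, the point--plane inclusion matrix has row space $V_0\oplus V_1$); as $u_P$ is the image of the indicator of $P$, it lies in $V_0\oplus V_1$. For $w_H$ one argues dually, either by the equivariance of the map carrying a function on hyperplanes to its sum over the hyperplanes on each plane, or via the duality $\pi\mapsto\pi^{\perp}$ under which ``planes inside $H$'' becomes ``the $(n-2)$-spaces on the point $H^{\perp}$'' in the isomorphic Grassmann scheme of $(n-2)$-spaces. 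Granting the claim, orthogonality of the eigenspaces gives $\langle u_P,\chi\rangle=\frac{|\LL|}{\gauss{n+1}{3}}\langle u_P,j\rangle$ and $\langle w_H,\chi\rangle=\frac{|\LL|}{\gauss{n+1}{3}}\langle w_H,j\rangle$. Since $\langle u_P,j\rangle=\gauss{n}{2}$ (the planes through $P$) and $\langle w_H,j\rangle=\gauss{n}{3}$ (the planes inside $H$), the two counts equal $|\LL|\gauss{n}{2}/\gauss{n+1}{3}$ and $|\LL|\gauss{n}{3}/\gauss{n+1}{3}$, independent of $P$ and $H$. The identities $\theta_n\gauss{n}{2}=\gauss{n+1}{3}\theta_2$ and $\theta_n\gauss{n}{3}=\gauss{n+1}{3}\theta_{n-3}$ then put these into the stated forms $|\LL|\theta_2/\theta_n$ and $|\LL|\theta_{n-3}/\theta_n$.

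I expect the only real obstacle to be the eigenspace-containment $u_P,w_H\in V_0\oplus V_1$. This can be cited from the general theory of inclusion matrices in $q$-analogues of the Johnson scheme (the inclusion matrix of $i$-spaces into $k$-spaces has row space $V_0\oplus\cdots\oplus V_i$ whenever $i\le\min(k,n+1-k)$), or verified directly by checking that $u_P$ and $w_H$ are annihilated by the projections onto $V_2$ and $V_3$; the hyperplane case needs the extra duality bookkeeping to reduce to the point case. Everything else is routine $q$-binomial arithmetic.
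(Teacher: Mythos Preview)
Your argument is correct and is essentially the same as the paper's, just with the packaging unfolded. The paper observes in Examples~\ref{Example_point} and~\ref{Example_hyperplane} that the set $T$ of planes on a fixed point (resp.\ inside a fixed hyperplane) is tight for $\lambda_1$---which is exactly your containment $u_P,w_H\in V_0\oplus V_1$---and then invokes Corollary~3.6 of De~Bruyn--Suzuki to conclude $|\LL\cap T|=|T|\cdot|\LL|/\gauss{n+1}{3}$; that corollary is precisely the orthogonality-of-eigenspaces computation you carry out by hand. So the only difference is that the paper cites the ready-made intersection formula for intriguing sets attached to distinct eigenvalues, while you reprove that formula in situ.
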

\begin{proof}
(a) This we have seen in Section 2.

(b) Consider the set $T$ of planes on a point. We have seen in Example \ref{Example_point} that $T$ is a tight set for the eigenvalue $\lambda_1$ of the Grassmann graph of planes of $\PG(n,q)$. By hypothesis $\LL$ is an intriguing set of the same graph for the eigenvalue $\lambda_2$. Therefore it follows from Corollary 3.6 in in \cite{DeBruyn} that $|\LL\cap T|=|T|\cdot|\LL|/\gauss{n+1}{3}$.

(c) Consider the set $T$ of planes in a hyperplane. This is a tight set for the eingenvalue $\lambda_1$ (Example \ref{Example_hyperplane}), so it follows from Corollary 3.6 in \cite{DeBruyn} that $|\LL\cap T|=|T|\cdot|\LL|/\gauss{n+1}{3}$.
\end{proof}

\begin{Co}\label{divisibility}
\begin{enumerate}
\renewcommand{\labelenumi}{\rm(\alph{enumi})}
\item If $3\mid n$, then $\theta_2\mid x$.
\item If $n$ is even, then $(q+1)^2\mid x$.
\item If $n$ is odd, then $(q+1)^2\mid\theta_{n-2}x$.
\end{enumerate}
\end{Co}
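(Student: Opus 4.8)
The plan is to derive all three divisibilities from the integrality of the three quantities appearing in Lemma~\ref{incidenceswithpointsandhyperplanes}, combined with two elementary arithmetic facts about the $\theta_m$. The first fact is that, since $q\equiv -1\pmod{q+1}$, we have $\theta_m=\sum_{i=0}^m q^i\equiv\sum_{i=0}^m(-1)^i\pmod{q+1}$; hence $(q+1)\mid\theta_m$ when $m$ is odd, while $\theta_m\equiv 1\pmod{q+1}$ when $m$ is even. The second fact is that $\gcd(\theta_{a-1},\theta_{b-1})=\theta_{\gcd(a,b)-1}$, which I would obtain from $\gcd(q^a-1,q^b-1)=q^{\gcd(a,b)}-1$ by writing $(q-1)\theta_{a-1}=q^a-1$ and using $\gcd(cm,cn)=c\gcd(m,n)$.

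For (c) and (b) I would start from the count in Lemma~\ref{incidenceswithpointsandhyperplanes}(b). Substituting the value of $|\LL|$ from part (a) of that lemma, the number of planes of $\LL$ through a point equals
\[
\frac{|\LL|\,\theta_2}{\theta_n}=\frac{\theta_{n-2}}{(q+1)^2}\,x .
\]
Since this is a nonnegative integer, $(q+1)^2\mid\theta_{n-2}x$; this already proves (c) (and in fact it holds for every $n$). For (b), assume $n$ is even. Then $n-2$ is even, so by the first arithmetic fact $\theta_{n-2}\equiv 1\pmod{q+1}$, whence $\gcd(\theta_{n-2},q+1)=1$ and therefore $\gcd(\theta_{n-2},(q+1)^2)=1$. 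Combined with $(q+1)^2\mid\theta_{n-2}x$ this yields $(q+1)^2\mid x$.

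For (a) I would instead use that $|\LL|$ itself is an integer. By Lemma~\ref{incidenceswithpointsandhyperplanes}(a),
\[
(q+1)^2\,\theta_2\mid\theta_n\theta_{n-2}\,x,
\]
and in particular $\theta_2\mid\theta_n\theta_{n-2}x$. Now suppose $3\mid n$. Then $n+1\equiv 1$ and $n-1\equiv 2\pmod 3$, so $\gcd(3,n+1)=\gcd(3,n-1)=1$; by the second arithmetic fact $\gcd(\theta_2,\theta_n)=\theta_0=1$ and $\gcd(\theta_2,\theta_{n-2})=\theta_0=1$. Hence $\theta_2$ is coprime to the product $\theta_n\theta_{n-2}$, and from $\theta_2\mid\theta_n\theta_{n-2}x$ I conclude $\theta_2\mid x$.

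The argument is essentially a bookkeeping of divisibilities, so I do not expect a serious obstacle; the only point that needs care is the second arithmetic fact, where the division by $q-1$ must be handled correctly (via $\gcd(cm,cn)=c\gcd(m,n)$) rather than assumed. I would also double-check that coprimality of $\theta_2$ with the product follows from coprimality with each factor, i.e.\ the standard fact that $\gcd(a,bc)=1$ whenever $\gcd(a,b)=\gcd(a,c)=1$.
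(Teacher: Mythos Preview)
Your argument is correct and is exactly the kind of derivation the paper has in mind: the corollary is stated without proof immediately after Lemma~\ref{incidenceswithpointsandhyperplanes}, and your computation $\dfrac{|\LL|\,\theta_2}{\theta_n}=\dfrac{\theta_{n-2}}{(q+1)^2}\,x$ together with the standard $\gcd$ facts for the $\theta_m$ is precisely how one extracts the three divisibility conditions from that lemma. Nothing needs to be added.
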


\begin{Le}
If some plane of $\LL$ has the property that each of its lines lies in a plane that is not contained in $\LL$, then
$x-1\ge (\lambda_2+1)/(q^2+q)$.
\end{Le}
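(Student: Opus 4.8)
The plan is to argue entirely locally at the given plane $\pi\in\LL$, counting line by line how many planes of $\LL$ can run through each line of $\pi$. For each of the $\theta_2=\gauss{3}{1}=q^2+q+1$ lines $\ell\subset\pi$, I would write $a_\ell$ for the number of planes of $\LL$ \emph{other than} $\pi$ that contain $\ell$. Since two distinct planes meet in at most a line, every plane of $\LL$ adjacent to $\pi$ meets $\pi$ in a unique line of $\pi$; grouping these planes according to that line gives $x'=\sum_{\ell\subset\pi}a_\ell$, where $x'=x+\lambda_2$ is the number of planes of $\LL$ meeting a plane of $\LL$ in a line.

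The key step is to show $a_\ell\le x-1$ for every line $\ell$ of $\pi$. By hypothesis each line $\ell\subset\pi$ lies in some plane $\tau\notin\LL$; as $\tau\ne\pi$ this forces $\tau\cap\pi=\ell$. Now any plane $\sigma\in\LL$ containing $\ell$ satisfies $\sigma\cap\tau\supseteq\ell$, and since $\sigma\ne\tau$ (one lies in $\LL$, the other does not) we get $\sigma\cap\tau=\ell$, so $\sigma$ is adjacent to $\tau$. Hence all $a_\ell+1$ planes of $\LL$ through $\ell$ (the $a_\ell$ counted above, together with $\pi$) are among the planes of $\LL$ adjacent to $\tau$. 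Because $\tau\notin\LL$, there are exactly $x$ such planes, so $a_\ell+1\le x$, that is $a_\ell\le x-1$.

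To finish, I would sum this estimate over the $\theta_2$ lines of $\pi$ to obtain $x'=\sum_{\ell\subset\pi}a_\ell\le \theta_2(x-1)$. Substituting $x'=x+\lambda_2$ and writing $\theta_2=q^2+q+1$ gives $x+\lambda_2\le(q^2+q)(x-1)+(x-1)$, which rearranges to $\lambda_2+1\le(q^2+q)(x-1)$, i.e.\ the claimed bound $x-1\ge(\lambda_2+1)/(q^2+q)$.

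The step where the hypothesis is genuinely needed, and which I expect to be the only real subtlety, is guaranteeing a plane $\tau\notin\LL$ through \emph{every} line $\ell$ of $\pi$: if some line $\ell$ had all of its $\theta_{n-2}-1$ further planes inside $\LL$, that line would evade the estimate $a_\ell\le x-1$, and the summation would break down. Everything else is a routine incidence count, so the crux is recognizing that the stated property of $\pi$ is exactly what makes the per-line bound uniform.
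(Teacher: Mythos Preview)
Your proof is correct and follows essentially the same approach as the paper. The only cosmetic difference is that the paper picks one line of $\pi$ carrying at least the average share $(\lambda_2+x)/\theta_2$ of the adjacent $\LL$-planes and bounds that single $a_\ell$ by $x-1$, whereas you bound every $a_\ell$ by $x-1$ and sum; both yield the same inequality $\lambda_2+x\le\theta_2(x-1)$.
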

\begin{proof}
Let $\pi$ be a plane of $\LL$. Then $\pi$ meets $\lambda_2+x$ other planes of $\LL$ in a line, so some line $\ell$ of $\pi$ lies in at least $(\lambda_2+x)/\theta_2$ further planes of $\LL$. By hypothesis, $\ell$ lies in a plane $\tau$ that does not belong to $\LL$. Then $\tau$ meets exactly $x$ planes of $\LL$ in a line, so it follows that $(\lambda_2+x)/\theta_2\le x-1$.
\end{proof}

\begin{Le}\label{linespread}
Suppose that every plane of $\LL$ has a line with the property that every plane on that line belongs to $\LL$. Suppose that $x<q\theta_{n-3}+(q+1)^2$. Then $n$ is odd, $x=(q+1)\theta_2$ and $\LL$ is based on a line spread of $\PG(n,q)$, that is $\LL$ consists of all planes containing a line of a given line spread.
\end{Le}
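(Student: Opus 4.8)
The plan is to study the lines with the stated property—call a line \emph{full} if every plane through it lies in $\LL$—and to prove that the full lines form a line spread whose planes are exactly $\LL$. The hypothesis says precisely that every plane of $\LL$ contains at least one full line, so I would write $g\ge 1$ for the number of full lines in any fixed plane of $\LL$ and first pin down $g$.

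The key first step is to bound $g$. A full line $\ell$ in a plane $\pi\in\LL$ forces all $\theta_{n-2}-1$ planes through $\ell$ other than $\pi$ into $\LL$, and each of these meets $\pi$ in the line $\ell$. Since two distinct lines of $\pi$ span $\pi$, planes meeting $\pi$ in different lines are distinct, so if $\pi$ contains $g$ full lines it meets at least $g(\theta_{n-2}-1)$ planes of $\LL$ in a line. As $\pi\in\LL$ meets exactly $x'=\lambda_2+x=(\theta_{n-2}-1)-(q+1)^2+x$ planes of $\LL$ in a line, I obtain $g(\theta_{n-2}-1)\le(\theta_{n-2}-1)-(q+1)^2+x$. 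Using $\theta_{n-2}-1=q\theta_{n-3}$, the case $g\ge2$ gives $x\ge q\theta_{n-3}+(q+1)^2$, contradicting the assumption $x<q\theta_{n-3}+(q+1)^2$. Hence $g=1$: every plane of $\LL$ contains exactly one full line. Two structural consequences follow at once. Distinct full lines are disjoint, for if two of them met in a point they would span a plane, which lies in $\LL$ (it contains a full line) yet would contain two full lines; and, since every plane on a full line is automatically in $\LL$ while every plane of $\LL$ has a unique full line, $\LL$ equals the set of planes containing a full line.

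The decisive step is to show the full lines cover every point, that is, form a spread rather than only a partial spread. For this I would count the planes of $\LL$ through a fixed point $P$, which by Lemma~\ref{incidenceswithpointsandhyperplanes}(b) equals the constant $|\LL|\theta_2/\theta_n$. Let $s$ be the number of full lines. If $P$ lies on a full line $\ell$ (necessarily the only one through $P$, by disjointness), the planes of $\LL$ through $P$ are the $\theta_{n-2}$ planes on $\ell$ together with the planes $\langle\ell',P\rangle$ for the $s-1$ full lines $\ell'\neq\ell$ (none of which passes through $P$), giving $\theta_{n-2}+s-1$; if $P$ lies on no full line, the bijection $\ell'\mapsto\langle\ell',P\rangle$ gives just $s$. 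Since $\LL\neq\emptyset$ forces $s>0$ and $\theta_{n-2}-1\neq0$, the constancy of this count excludes the second alternative, so every point lies on a full line and the full lines form a line spread. A partition of the points into lines forces $(q+1)\mid\theta_n$, that is, $n$ is odd, and then $s=\theta_n/(q+1)$ and $|\LL|=s\theta_{n-2}$. Comparing with $|\LL|=\theta_n\theta_{n-2}x/((q+1)^2\theta_2)$ from Lemma~\ref{incidenceswithpointsandhyperplanes}(a) yields $x=(q+1)\theta_2$, which finishes the proof.

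I expect the covering step to be the main obstacle. Bounding the full lines per plane and proving their disjointness is essentially forced by the eigenvalue identity and the elementary geometry of planes through a line, but upgrading a partial spread to a genuine spread requires exploiting the exact point-regularity of $\LL$, which is exactly what Lemma~\ref{incidenceswithpointsandhyperplanes}(b) supplies; the two-case plane count through a point is the place where the argument could most easily go wrong if one forgets that a point lies on at most one full line.
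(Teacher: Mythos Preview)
Your argument is correct. The first half---bounding the number $g$ of full lines in a plane by $1$ via $g(\theta_{n-2}-1)\le\lambda_2+x$, concluding that the full lines are pairwise disjoint, and identifying $\LL$ with the set of planes through a full line---matches the paper's proof essentially verbatim.

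Where you diverge is in upgrading the partial spread to a spread. The paper does not invoke Lemma~\ref{incidenceswithpointsandhyperplanes}(b); instead it stays with the defining intriguing property. Fixing a full line $\ell$ and a plane $\pi\supset\ell$, it observes that if $\pi$ meets $c$ of the other full lines then $\pi$ meets exactly $\theta_{n-2}-1+c(q+1)$ planes of $\LL$ in a line; since this must equal $\lambda_2+x$, the number $c$ is constant, and a double count over all planes on $\ell$ gives $c=(|F|-1)(q+1)/\theta_{n-2}$. Combining this with $|\LL|=|F|\theta_{n-2}$ and~(\ref{eqn_L_general}) yields two equations that force $|F|=\theta_n/(q+1)$ and $x=(q+1)\theta_2$. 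Your route via point-regularity is cleaner: the two-case count of planes of $\LL$ through a point ($\theta_{n-2}+s-1$ versus $s$) immediately forces every point onto a full line, after which the parity of $n$ and the value of $x$ drop out. The trade-off is that you lean on Lemma~\ref{incidenceswithpointsandhyperplanes}(b), which in turn rests on the tight/intriguing orthogonality from~\cite{DeBruyn}, whereas the paper's computation is self-contained once the eigenvalue relation $x'=\lambda_2+x$ is in hand.
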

\begin{proof}
In this proof we call a line of $\PG(n,q)$ a full line, if all $\theta_{n-2}$ planes on the line belong to $\LL$. By hypothesis, every plane of $\LL$ contains a full line.

Assume some plane $\pi$ contains two full lines $\ell_1$ and $\ell_2$. Then $\pi$ meets at least $2(\theta_{n-2}-1)$ other planes of $\LL$ in a line. Hence $2(\theta_{n-2}-1)\le \lambda_2+x$. This is equivalent to $x\ge q\theta_{n-3}+(q+1)^2$. This contradicts the hypotheses of the lemma.

Hence every plane of $\pi$ has a unique full line. Then the full lines form a partial spread $F$ and $\LL$ consists of the $|F|\theta_{n-2}$ planes containing a line of $F$. Consider a line $\ell\in F$. If a plane $\pi$ on $\ell$ meets exactly $c$ lines of $F\setminus\{\ell\}$, then $\pi$ meets exactly $\theta_{n-2}-1+c(q+1)$ planes of $\LL$ in a line, since each of the $c$ lines lies on $q+1$ planes that meet $\pi$ in a line. This shows that $c$ is independent of $\pi$. Since the lines in $F\setminus\{\ell\}$ cover $(|F|-1)(q+1)$ points and $\ell$ lies on $\theta_{n-2}$ planes, we see that $c=(|F|-1)(q+1)/\theta_{n-2}$. As each plane of $\LL$ meets exactly $\lambda_2+x$ planes of $\LL$ in a line, it follows that $\lambda_2+x=\theta_{n-2}-1+c(q+1)$. Since $\lambda_2=\theta_{n-2}-1-(q+1)^2$, it follows that
\begin{eqnarray}\label{eqn_wow}
x-(q+1)^2=\frac{(|F|-1)(q+1)^2}{\theta_{n-2}}.
\end{eqnarray}
Combining $|\LL|=|F|\theta_{n-2}$ and (\ref{eqn_L_general}) gives $x=|F|(q+1)^2\theta_2/\theta_n$. Then (\ref{eqn_wow}) implies that $|F|=\theta_n/(q+1)$ and $x=(q+1)\theta_2$. Hence $n$ is odd, $F$ is a spread of $\PG(n,q)$, and $\LL$ is based on this spread.
\end{proof}

\begin{Co}
If $n\ge 8$ is even, then $x\ge q\theta_{n-4}/(q+1)$. If $n=6$, then $x\ge (q+1)^2\theta_2$.
\end{Co}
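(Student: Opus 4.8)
The plan is to combine the two preceding lemmas, whose hypotheses are exactly complementary, with the divisibility statements of Corollary~\ref{divisibility}. Indeed, either every plane of $\LL$ contains a line all of whose planes lie in $\LL$ (the hypothesis of Lemma~\ref{linespread}), or there is a plane of $\LL$ each of whose lines lies in some plane outside $\LL$ (the hypothesis of the lemma preceding it). So I would split the argument for $n\ge 8$ even according to which of these two exhaustive situations occurs, and treat $n=6$ separately by a purely arithmetic argument.

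For $n\ge 8$ even I would argue as follows. If the hypothesis of Lemma~\ref{linespread} holds, then since $n$ is even the conclusion of that lemma (that $n$ is odd) cannot hold, so its other hypothesis must fail, i.e.\ $x\ge q\theta_{n-3}+(q+1)^2$; a trivial comparison, using $q\theta_{n-3}>q\theta_{n-4}>q\theta_{n-4}/(q+1)$, shows this already dominates the claimed bound. In the remaining case the preceding lemma gives $x-1\ge(\lambda_2+1)/(q^2+q)$, and the heart of the matter is the identity
\[
1+\frac{\lambda_2+1}{q^2+q}=\frac{q\theta_{n-4}}{q+1},
\]
which I would verify by writing $\lambda_2+1=q^3\theta_{n-5}-q$, cancelling a factor $q$ to get $(\lambda_2+1)/(q^2+q)=(q^2\theta_{n-5}-1)/(q+1)$, and then simplifying $(q+q^2\theta_{n-5})/(q+1)=q\theta_{n-4}/(q+1)$ via $1+q\theta_{n-5}=\theta_{n-4}$. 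This yields $x\ge q\theta_{n-4}/(q+1)$ in both cases.

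For $n=6$ the bound coming from the preceding lemma is far too weak (it only gives roughly $q\theta_2/(q+1)$), so I would instead invoke Corollary~\ref{divisibility} directly. Since $3\mid 6$, part~(a) gives $\theta_2\mid x$, and since $6$ is even, part~(b) gives $(q+1)^2\mid x$. As $\theta_2=q^2+q+1=q(q+1)+1$ is coprime to $q+1$, and hence to $(q+1)^2$, the two moduli are coprime, so their product divides $x$; with $x>0$ this forces $x\ge(q+1)^2\theta_2$.

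I do not expect a genuine obstacle here: the statement is a corollary of material already in hand, and the only points requiring care are confirming that the two lemma hypotheses are exhaustive, carrying out the algebraic simplification in the $n\ge 8$ case, and checking the coprimality $\gcd(\theta_2,(q+1)^2)=1$ in the $n=6$ case. The mildly subtle step is recognising that for $n=6$ the lemmas are insufficient and the divisibility conditions must be used instead.
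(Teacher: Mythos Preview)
Your proposal is correct and follows essentially the same approach as the paper: for $n\ge 8$ even you take the minimum of the two bounds coming from the two preceding lemmas (which are indeed complementary, since $n$ even precludes the line-spread conclusion), and for $n=6$ you use the divisibility conditions of Corollary~\ref{divisibility} together with $\gcd(\theta_2,(q+1)^2)=1$. The paper's proof is more terse but identical in substance, including the identification of $1+(\lambda_2+1)/(q^2+q)$ with $q\theta_{n-4}/(q+1)$.
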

\begin{proof}
The first statement follows from the fact that when $n$ is even, $\LL$ cannot be based on a line spread, and hence by the two previous lemmas $x\ge \min((\lambda_2+1)/(q^2+q)+1, q\theta_{n-3}+(q+1)^2+1)=q\theta_{n-4}/(q+1)$. The second statement follows from Corollary \ref{divisibility}.
\end{proof}

\begin{Th}
If $n\ge 9$ is odd, then $x\ge (q+1)\theta_2$ and equality implies that $\LL$ is based on a line spread.
\end{Th}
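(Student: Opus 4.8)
The plan is to split the argument according to whether every plane of $\LL$ contains a \emph{full line}, by which I mean a line all $\theta_{n-2}$ of whose planes belong to $\LL$. The two lemmas immediately preceding the theorem are tailored to exactly the two halves of this dichotomy, so once the dichotomy is set up the work reduces to checking that the lower bounds those lemmas supply both exceed $(q+1)\theta_2$ as soon as $n\ge 9$.

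First I would treat the case where not every plane of $\LL$ contains a full line. Then some plane of $\LL$ has the property that each of its lines lies in a plane outside $\LL$, so the Lemma giving $x-1\ge(\lambda_2+1)/(q^2+q)$ applies. Writing $\lambda_2+1=q^{n-2}+q^{n-3}+\dots+q^3-q$ and $(q+1)\theta_2=q^3+2q^2+2q+1$, the desired strict inequality $1+(\lambda_2+1)/(q^2+q)>(q+1)\theta_2$ reduces to the polynomial inequality $\lambda_2+1>(q^2+q)\bigl(q^3+2q^2+2q\bigr)=q^5+3q^4+4q^3+2q^2$, whose left-hand side has leading term $q^{n-2}$. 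For $n\ge 9$ one has $q^{n-2}\ge q^7$, which dominates the right-hand side for every $q\ge 2$, so here $x>(q+1)\theta_2$ strictly. This proves the bound and simultaneously rules out equality in this case. This is precisely where the hypothesis $n\ge 9$ is used: for $n=7$ the left-hand side only reaches leading term $q^5$, and the inequality fails for small $q$, which is why that case is postponed and needs $q\ge 13$.

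Next I would treat the case where every plane of $\LL$ contains a full line, where Lemma \ref{linespread} applies. If $x<q\theta_{n-3}+(q+1)^2$, the lemma yields $x=(q+1)\theta_2$ together with the conclusion that $\LL$ is based on a line spread; this is exactly the equality situation of the theorem. Otherwise $x\ge q\theta_{n-3}+(q+1)^2$, and since $q\theta_{n-3}=q+q^2+\dots+q^{n-2}$ already has leading term $q^{n-2}\ge q^7$ for $n\ge 9$, one gets $q\theta_{n-3}+(q+1)^2>(q+1)\theta_2$, so again $x>(q+1)\theta_2$ strictly.

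Combining the cases gives $x\ge(q+1)\theta_2$ for all odd $n\ge 9$, and equality forces the sub-case in which $\LL$ is based on a line spread, since every other possibility yields strict inequality. I expect the only genuine work to be the polynomial comparison of the first case; everything else is a direct appeal to the two preceding lemmas, and the threshold $n\ge 9$ is dictated exactly by when that comparison succeeds uniformly in $q$.
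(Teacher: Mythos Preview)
Your argument is correct and matches the paper's intended proof: the theorem is stated without an explicit proof, but the remark immediately following it (that for $n=7$ ``the above arguments only show $x\ge q^3+q$'') confirms that the authors mean exactly the dichotomy you spell out, combining Lemma~3.3 with Lemma~\ref{linespread}. Your computation of the threshold and the check that $n\ge 9$ is precisely where $(\lambda_2+1)/(q^2+q)+1$ overtakes $(q+1)\theta_2$ are accurate.
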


For $n=7$ the above arguments only show $x\ge q^3+q$. It will take considerably more effort in Section \ref{Sectionnis7} to show that the theorem also holds for $n=7$ and we can show this only for $q\ge 13$. For $n=5$, the theorem does not remain true, since the set of all planes of a symplectic polar space $W(5,q)$ embedded in $\PG(5,q)$ provides an intriguing set for the eigenvalue $\lambda_2$ with $x=(q+1)^2$. This case is handled in Section \ref{Sectionnis5}.

\section{The case $n=5$ and planes of $W(5,q)$}\label{Sectionnis5}

In this section $\LL$ denotes a non-trivial intriguing set of planes of $PG(5,q)$ corresponding to the second non-trivial eigenvalue $\lambda_2:=q^3-q-1$ of the Grassmann graph. Hence there  exists an integer $x>0$ such that $|\LL|=x(q^2+1)(q^2-q+1)$, every plane of $\LL$ meets $\lambda_2+x$ planes of $\LL$ in a line, and every plane not in $\LL$ meets $x$ planes of $\LL$ in a line. Examples are given in \ref{ex1} and \ref{ex2}. In the example of \ref{ex1} we have $x=(q+1)^2$ and we will show in this section that $x$ can not be smaller. More precisely we will prove the following theorem.

\begin{Th}\label{THW5q}
Let $\LL$ be a set of planes of $\PG(5,q)$ with $q\ge 25$ such that $\LL$ is not empty and does not contain all planes of $\PG(5,q)$. Suppose that there exists an integer $x>0$ such that
\[
|\LL|=x(q^2+1)(q^2-q+1)
\]
every plane of $\LL$ meets $\lambda_2+x$ planes of $\LL$ in a line, and every plane not in $\LL$ meets $x$ planes of $\LL$ in a line. Then $x\ge (q+1)^2$. Also, if $x=(q+1)^2$, then one of the following holds.\begin{enumerate}
\renewcommand{\labelenumi}{\rm(\alph{enumi})}
\item There exists a solid $S$ of $\PG(5,q)$ such that every plane of $S$ belongs to $\LL$, and every plane of $\LL$ is contained in $S$ or meets $S$ in exactly one point, or, dually, there exists a line $\ell$ of $\PG(5,q)$ such that all planes on $\ell$ belong to $\LL$, and every plane of $\LL$ contains $\ell$ or is skew to $\ell$.
\item Every line and every solid is incident with either $0$ or $q+1$ planes of $\LL$.
\end{enumerate}
\end{Th}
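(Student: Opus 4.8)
The plan is to analyze the local incidence numbers of $\LL$ with lines and with solids and to play the two dual pieces of information against each other. For a line $\ell$ write $t_\ell$ for the number of planes of $\LL$ through $\ell$, and for a solid $S$ write $s_S$ for the number of planes of $\LL$ contained in $S$. Since in $\PG(5,q)$ duality interchanges lines and solids and preserves the dimension of planes, the dual of $\LL$ is again an intriguing set with the same parameter $x$, so every statement about the $t_\ell$ has a mirror statement about the $s_S$; this is the source of the two alternatives in part (a). First I would record the elementary identities obtained by counting, for a fixed plane $\pi$, the planes of $\LL$ meeting $\pi$ in a line: one gets $\sum_{\ell\subset\pi}t_\ell=x$ when $\pi\notin\LL$ and $\sum_{\ell\subset\pi}t_\ell=x+q^2(q+1)$ when $\pi\in\LL$, together with the global moment identities $\sum_\ell t_\ell=|\LL|\theta_2$ and $\sum_\ell t_\ell^2=|\LL|(x'+\theta_2)$, where $x'=x+\lambda_2$.

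The first genuine step is a clean clique bound. If a solid $S$ contains a plane $\pi\in\LL$, then $\pi$ meets each of the other $\theta_3-1$ planes of $S$ in a line, so $s_S-1\le x'$; dually $t_\ell\le x'+1$ for every line $\ell$. In particular a \emph{full} solid (all $\theta_3$ of its planes in $\LL$) or a \emph{full} line (all $\theta_3$ planes through it in $\LL$) forces $\theta_3-1\le x'$, i.e. $x\ge(q+1)^2$. I would then show that when such a full object is present the regularity of $\LL$ together with the identities above pins the remaining planes down exactly, producing the configuration of part (a): the full solid $S$ with the other planes of $\LL$ meeting it in a single point, or dually a full line with the remaining planes skew to it. This disposes of one alternative and of its equality discussion.

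For the main case I would assume that no full line and no full solid occurs and aim at $x\ge(q+1)^2$ by a second moment estimate localized at a point $P$. Passing to the quotient $\PG(5,q)/P\cong\PG(4,q)$, the planes of $\LL$ through $P$ become a set $\mathcal M_P$ of $a:=x(q^2+1)/(q+1)$ lines, the degree of a point of $\PG(4,q)$ in $\mathcal M_P$ equals the corresponding $t_\ell$, and two planes of $\LL$ through $P$ meet in a line exactly when the two lines of $\mathcal M_P$ meet. The goal is to combine the per-plane identities along the lines of each $\pi\in\LL$ through $P$ with the bound $t_\ell\le x'+1$ and a convexity estimate on the degree sequence of $\mathcal M_P$ so as to obtain a quadratic inequality in $x$ whose admissible range, after the lower order error terms are controlled, is $x\ge(q+1)^2$; this is where the hypothesis $q\ge 25$ enters, to render those error terms harmless. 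Tracking the equality cases should force every nonzero $t_\ell$ to equal $q+1$, and applying the same analysis to the dual intriguing set yields the corresponding statement for solids; since for $\pi\in\LL$ the identity reads $\sum_{\ell\subset\pi}t_\ell=(q+1)\theta_2$, all $\theta_2$ lines of such a plane then carry exactly $q+1$ planes of $\LL$, which is precisely alternative (b).

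The hard part is this main inequality in the no-full-object case. The naive global moment bounds are essentially scale invariant and simply do not see the threshold $(q+1)^2$, while the local data at $P$ is coupled to planes of $\LL$ \emph{not} through $P$ by cross terms that cancel automatically in the first order identities and so carry no information by themselves. The crux is to extract enough rigidity from the middle eigenvalue condition---for instance by exploiting the solid bound $s_S\le x'+1$ and the line bound simultaneously in a single count, so that the contributions of planes through and not through $P$ can be separated---to convert the soft estimates into the sharp bound and, at equality, into the exact $0/(q+1)$ incidence pattern. Making these error terms work for every $q\ge 25$, rather than only for asymptotically large $q$, is the most delicate point of the argument.
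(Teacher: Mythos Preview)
Your treatment of the ``full object'' case is essentially correct and matches Lemma~4.5 of the paper: once some solid carries more than $x$ planes of $\LL$, every plane of that solid must lie in $\LL$ (else a non-$\LL$ plane of the solid would meet too many), and then the hyperplane count $(q^2+1)x=(q+1)\beta+z$ with $\beta=\theta_3$ forces $x=(q+1)^2$ and $z=0$, giving alternative~(a). So far so good.

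The genuine gap is the main case. You correctly diagnose that global second-moment identities on the $t_\ell$ are scale-invariant and cannot by themselves detect the threshold $(q+1)^2$; but you do not supply a concrete mechanism that breaks this invariance. ``Convexity on the degree sequence of $\mathcal M_P$'' together with the bound $t_\ell\le x'+1$ does not yield a usable quadratic in $x$: the upper bound $x'+1$ is far above the target value $q+1$, and the quotient picture at a point carries no constraint that is not already present in the original first- and second-moment identities. What is missing is a \emph{structural} input that distinguishes small from large local incidence, and your outline never names one.

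The paper's argument is of a different nature. It fixes a solid $S$ realising the maximum $\beta=\max_S s_S$ (so also $t_\ell\le\beta$ by duality), and proves a \emph{gap}: either $\beta=q+1$, or $\beta\ge\frac12(q^2+5)$. The engine is a counting lemma (Lemma~4.6) that, for any $\alpha$ planes of $\LL$ inside $S$, bounds $\alpha(x+\lambda_2+1-\beta)$ above by $(q^2+1)x-(q+1)\beta$ plus an ``overlap'' term controlled by how the $\alpha$ planes pairwise intersect. Playing this with $\alpha=\beta$ and with $\alpha=2q$ gives the gap. The large-$\beta$ range is then killed by a second structural lemma: one locates a plane of $\LL$ in $S$ with three lines each lying in many ($\ge\gamma$) planes of $\LL\cap S$, and re-runs the counting lemma on the $3\gamma-2$ planes through those lines to reach a contradiction for $q\ge 25$. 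Finally $\beta=q+1$ feeds back into the first-moment count over solids to force $x=(q+1)^2$ and the $0/(q{+}1)$ pattern of alternative~(b). None of this is a moment or convexity argument; the key idea your proposal lacks is to work \emph{inside a single extremal solid} and exploit the combinatorics of how planes of a solid intersect pairwise.
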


\begin{remark}
We do not know whether (1) can happen. The planes of an embedded symplectic polar space provide an example with $x=(q+1)^2$ that satisfies (2).
\end{remark}

We prove this theorem in a series of lemmas. Throughout we assume that $\LL$ is a set of planes of $\PG(5,q)$ as in the theorem and that
\begin{align}\label{n5_bound_on_x}
x\le (q+1)^2.
\end{align}

\begin{Le}\label{incidenceswithpointsandhyperplanes_special_n5}
Every point and every hyperplane is incident with exactly $(q^2+1)x/(q+1)$ planes of $\LL$. Hence $q+1$ divides $2x$.
\end{Le}
\begin{proof}
This is a special case of Lemma \ref{incidenceswithpointsandhyperplanes} (b) and (c).
\end{proof}

\begin{Le}\label{z1z2}
Let $S$ be a solid, $\beta$ the number of planes of $\LL$ in $S$ and $z$ the number of planes of $S$ meeting $S$ in a line. Then $z=(q^2+1)x-\beta(q+1)$.
\end{Le}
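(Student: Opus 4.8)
The plan is a double count of incident pairs $(P,\pi)$, where $P$ is a point of the solid $S$ and $\pi$ is a plane of $\LL$ passing through $P$. First I would record how a plane can meet $S$ in $\PG(5,q)$: a plane has projective dimension $2$ and $S$ has dimension $3$, so the dimension formula gives $\dim(\pi\cap S)\ge 2+3-5=0$. Hence every plane of $\LL$ meets $S$ in at least a point, and the intersection is either a single point, a line, or all of $\pi$ (the last precisely when $\pi\subseteq S$). Writing $a$ for the number of planes of $\LL$ meeting $S$ in exactly a point, $z$ for those meeting it in a line, and $\beta$ for those contained in $S$, the exhaustiveness of these three cases yields the total-count relation $a+z+\beta=|\LL|=x(q^2+1)(q^2-q+1)$.

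Next I would evaluate the two sides of the double count. On the point side, $S$ has $\theta_3=(q+1)(q^2+1)$ points, and by Lemma \ref{incidenceswithpointsandhyperplanes_special_n5} each lies on exactly $(q^2+1)x/(q+1)$ planes of $\LL$; hence the number of incident pairs equals $(q+1)(q^2+1)\cdot(q^2+1)x/(q+1)=(q^2+1)^2x$. On the plane side, a plane of $\LL$ meeting $S$ in a point, in a line, or in all of $\pi$ contributes respectively $1$, $q+1$, or $q^2+q+1$ points of $S$, so the same count equals $a+z(q+1)+\beta(q^2+q+1)$. Equating the two expressions produces one linear relation among $a$, $z$, $\beta$ and $x$.

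Finally I would eliminate $a$ by subtracting the total-count relation from the incidence relation. This leaves $zq+\beta(q^2+q)=(q^2+1)^2x-x(q^2+1)(q^2-q+1)$, and the key simplification is $(q^2+1)^2-(q^2+1)(q^2-q+1)=q(q^2+1)$, so the right-hand side is $xq(q^2+1)$. Dividing through by $q$ gives $z+\beta(q+1)=x(q^2+1)$, which is exactly the claimed identity $z=(q^2+1)x-\beta(q+1)$. There is no real obstacle beyond bookkeeping: the only points that require care are that no plane is skew to $S$ (so the three intersection types are exhaustive) and that the auxiliary quantity $a$ drops out, which it does precisely because the incidence and total counts share the same leading behaviour, as the arithmetic above confirms.
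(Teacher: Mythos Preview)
Your argument is correct but differs from the paper's. The paper double-counts incidences of the form $(H,\pi)$ with $H$ a hyperplane through $S$ and $\pi\in\LL$ contained in $H$: there are $q+1$ hyperplanes on $S$, each containing $(q^2+1)x/(q+1)$ planes of $\LL$ by Lemma~\ref{incidenceswithpointsandhyperplanes_special_n5}, and since any plane $\pi\subseteq H$ satisfies $\dim(\pi\cap S)\ge 2+3-4=1$, only the two intersection types ``$\pi\subseteq S$'' and ``$\pi\cap S$ is a line'' occur. This gives $(q^2+1)x=(q+1)\beta+z$ directly.

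Your approach is the point-wise dual of this: you count pairs $(P,\pi)$ with $P\in S$ and $\pi\in\LL$ through $P$, which forces you to introduce the auxiliary count $a$ of planes meeting $S$ in a single point and then eliminate it via the relation $a+z+\beta=|\LL|$. Both arguments rest on the same constancy lemma (points and hyperplanes are each incident with a fixed number of planes of $\LL$), so neither is deeper than the other; the paper's route is marginally cleaner because working inside a hyperplane on $S$ automatically rules out the single-point intersection type, so no elimination step is needed.
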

\begin{proof}
By Lemma \ref{incidenceswithpointsandhyperplanes_special_n5},    each of the $q+1$ hyperplanes on $S$ contains $(q^2+1)x/(q+1)$ planes of $\LL$ and all these planes lie in $S$ or meet $S$ in a line. Conversely, the $\beta$ planes of $\LL$ contained in $S$ lie in all of these $q+1$ hyperplanes, whereas the $z$ planes of $\LL$ that meet $S$ in a line, lie in exactly one of these. Hence $(q^2+1)x=(q+1)\beta+z$.
\end{proof}

\begin{Le}\label{atmostx}
\begin{enumerate}
\renewcommand{\labelenumi}{\rm(\alph{enumi})}
\item
Suppose that there exists a solid $S$ that is incident with more than $x$ planes of $\LL$. Then $x=(q+1)^2$ and every plane of  $S$ is contained in $\LL$. Also every plane of $\LL$ that is not incident with $S$ meets $S$ in a point.
\item Suppose that there exists a line $\ell$ that is incident with more than $x$ planes of $\LL$. Then $x=(q+1)^2$, every plane that contains $\ell$ lies in $\LL$. Also, every plane of $\LL$ that does not contain $\ell$ is skew to $\ell$.
\end{enumerate}
\end{Le}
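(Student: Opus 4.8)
The plan is to exploit the elementary but decisive fact that any two distinct planes of a solid $S\cong\PG(3,q)$ meet in a line. This makes the intriguing condition extremely restrictive for the planes of $S$, and essentially forces everything.

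First I would establish part (a). Write $\beta$ for the number of planes of $\LL$ contained in $S$, so by hypothesis $\beta>x$. If some plane $\pi$ of $S$ did \emph{not} lie in $\LL$, then $\pi$ meets exactly $x$ planes of $\LL$ in a line; but $\pi$ already meets each of the $\beta$ planes of $\LL$ contained in $S$ in a line, which forces $\beta\le x$, a contradiction. Hence every one of the $\theta_3=q^3+q^2+q+1$ planes of $S$ lies in $\LL$. Now fix any plane $\pi$ of $S$: it lies in $\LL$ and therefore meets $\lambda_2+x$ planes of $\LL$ in a line, of which exactly $\theta_3-1$ are the remaining planes of $S$. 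Consequently $\pi$ meets precisely $\lambda_2+x-(\theta_3-1)=x-(q+1)^2$ planes of $\LL$ lying outside $S$ in a line. Since this number is non-negative while the standing assumption gives $x\le(q+1)^2$, I conclude $x=(q+1)^2$ and, crucially, that no plane of $S$ meets any plane of $\LL$ outside $S$ in a line.

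To finish (a) I would rule out that a plane $\tau\in\LL$ with $\tau\not\subseteq S$ meets $S$ in a line. In $\PG(5,q)$ a plane not contained in the solid $S$ meets $S$ either in a point or in a line; if $\tau\cap S$ were a line $\ell$, then each of the $q+1$ planes of $S$ through $\ell$ lies in $\LL$ and meets $\tau$ in the line $\ell$, contradicting the conclusion just reached. Hence every plane of $\LL$ off $S$ meets $S$ in exactly one point, which is the remaining assertion of (a).

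For part (b) I would not repeat the argument but appeal to duality. The standard duality of $\PG(5,q)$ sends planes to planes and preserves the relation ``meeting in a line'' (two planes meet in a line exactly when they span a solid, a self-dual condition), so it is an automorphism of the Grassmann graph of planes and carries $\LL$ to an intriguing set $\LL^{*}$ for the same eigenvalue $\lambda_2$ and the same $x$. A line $\ell$ lying on more than $x$ planes of $\LL$ then dualizes to a solid $\ell^{*}$ containing more than $x$ planes of $\LL^{*}$; applying (a) to $\LL^{*}$ and $\ell^{*}$ and translating back — inclusion reverses, and ``meets the solid in a point'' becomes ``is skew to the line'' — yields exactly (b). The computations throughout are routine; the only points demanding care are the correct enumeration of the possible intersection dimensions of a plane with the solid $S$ (so that a line is genuinely the sole alternative to a point in the final step) and the faithful translation of the dual correspondences in (b). I anticipate no serious obstacle beyond these verifications.
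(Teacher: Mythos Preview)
Your argument is correct and follows essentially the same line as the paper's proof: both first force every plane of $S$ into $\LL$ via the $x$-regularity of planes outside $\LL$, and then a short count gives $x=(q+1)^2$ together with the absence of planes of $\LL$ meeting $S$ in a line, while (b) is obtained by duality. The only minor difference is that the paper invokes Lemma~\ref{z1z2} (i.e.\ the hyperplane count) to get $z=(q^2+1)(x-(q+1)^2)=0$ in one stroke, whereas you reach the same conclusion from the intriguing count $\lambda_2+x-(\theta_3-1)=x-(q+1)^2$ on a single plane of $S$; this is an equally clean and slightly more self-contained variant.
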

\begin{proof}
Since (a) and (b) are dual statements, it suffices to prove (a). Suppose therefore that there exists a solid $S$ such that more than $x$ planes of $\LL$ are contained in $S$. As every plane that is not in $\LL$ meets exactly $x$ planes of $\LL$ in a line, it follows that all planes of $S$ belong to $\LL$. Since all $\theta_3$ planes of $S$ belong to $\LL$, Lemma \ref{z1z2} and (\ref{n5_bound_on_x}) imply that $x=(q+1)^2$ and that there does not exist a plane in $\LL$ that meets $S$ in a line.
\end{proof}

The statement of the previous lemma describes the situation of (a) in Theorem \ref{THW5q}. In order to prove Theorem \ref{THW5q} we can therefore assume from now on that every line and every solid is incident with at most $x$ planes of $\LL$. By $\beta$ we denote the largest integer such that there exists a solid or a line that is incident with exactly $\beta$ planes of $\LL$. Then
\begin{align}\label{inequality_beta}
2\le \beta\le x.
\end{align}
Here $\beta\ge 2$ holds, since every plane of $\LL$ intersects $\lambda_2+x>0$ planes of $\LL$ in a line. We remark that the properties of $\LL$ are selfdual, that is, $\LL$ as a set of planes of the dual projective space has the same properties. Hence, by switching to the dual space, if necessary, it would be no loss of generality to assume that there exists a solid with $\beta$ planes of $\LL$. We will do so later.

\begin{Le}\label{basiccountingargument}
Suppose that $S$ is a solid that contains $\beta$ planes of $\LL$. Let $\alpha$ be an integer with $2\le \alpha\le \beta$, and let $\pi_1,\dots,\pi_\alpha$ be planes of $\LL$ that are contained in $S$. Let $G$ be the set of lines of $S$ that are incident with at least two planes of $\{\pi_1,\dots,\pi_\alpha\}$. For $l\in G$ let $w_l$ be the number of planes of $\{\pi_1,\dots,\pi_\alpha\}$ that contain $l$ and let $g_l$ be the number of planes $\pi$ of $\LL$ with $\pi\cap S=l$.
\begin{enumerate}
\renewcommand{\labelenumi}{\rm(\alph{enumi})}
\item $\sum_{l\in G}w_l(w_l-1)=\alpha(\alpha-1)$ and $\sum_{l\in G}(w_l-1)\le \frac12\alpha(\alpha-1)$.
\item We have
\begin{eqnarray}\label{eqn_basiccountingargument}
\alpha(x+\lambda_2+1-\beta)\le (q^2+1)x-\beta(q+1)+\sum_{l\in G}(w_l-1)g_l
\end{eqnarray}
and equality implies that every line of $S$ that lies in a plane of $\LL$ is a line of one of the planes $\pi_1,\dots,\pi_\alpha$.
\end{enumerate}
\end{Le}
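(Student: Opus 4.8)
The plan is to prove (a) by a pair-count inside the solid $S$, and (b) by a double count of the incidences between the chosen planes and the planes of $\LL$ meeting $S$ in a line.

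For (a), recall that $S$ is a $\PG(3,q)$, so any two distinct planes contained in $S$ intersect in exactly a line. Hence every ordered pair $(\pi_i,\pi_j)$ with $i\neq j$ determines the line $\pi_i\cap\pi_j$, which lies in $G$, and conversely a line $l\in G$ lying on $w_l$ of the planes $\pi_1,\dots,\pi_\alpha$ arises from exactly $w_l(w_l-1)$ such ordered pairs. Counting the $\alpha(\alpha-1)$ ordered pairs in two ways gives $\sum_{l\in G}w_l(w_l-1)=\alpha(\alpha-1)$. For the second inequality I would use that $w_l\ge 2$ for every $l\in G$, whence $w_l-1\le\tfrac12 w_l(w_l-1)$; summing and applying the first identity yields $\sum_{l\in G}(w_l-1)\le\tfrac12\alpha(\alpha-1)$.

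For (b), I would double count the pairs $(\pi,i)$ with $\pi\in\LL$, $\pi\not\subseteq S$, $1\le i\le\alpha$, and $\pi\cap\pi_i$ a line. Fixing $i$: since $\pi_i\in\LL$ it meets exactly $\lambda_2+x$ planes of $\LL$ in a line, and among these are the $\beta-1$ planes of $\LL$ contained in $S$ other than $\pi_i$ (again, two distinct planes of the solid meet in a line); subtracting, $\pi_i$ meets exactly $x+\lambda_2+1-\beta$ planes of $\LL$ outside $S$ in a line, so summing over $i$ gives the left-hand side $\alpha(x+\lambda_2+1-\beta)$. Fixing $\pi$ instead: as $\pi_i\subseteq S$ we have $\pi\cap\pi_i\subseteq\pi\cap S$, and since $\pi\not\subseteq S$ the trace $\pi\cap S$ is a point or a line. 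If it is a point, then $\pi$ meets no $\pi_i$ in a line; if it is a line $l$, then $\pi\cap\pi_i$ is a line exactly when $l\subseteq\pi_i$, so $\pi$ contributes the number $m_l$ of planes among $\pi_1,\dots,\pi_\alpha$ that contain $l$ (with $m_l=w_l$ when $l\in G$). Grouping the planes $\pi$ by their trace $l=\pi\cap S$ and writing $g_l$ for the number of planes of $\LL$ meeting $S$ exactly in $l$, the double count equals $\sum_l g_l m_l$.

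It remains to estimate $\sum_l g_l m_l$. Splitting according to $m_l=1$ and $l\in G$ (where $m_l=w_l\ge2$), I would rewrite $\sum_l g_l m_l=\sum_{l\in G}(w_l-1)g_l+\sum_{l:\,m_l\ge1}g_l$, and then bound $\sum_{l:\,m_l\ge1}g_l\le\sum_{\text{all }l}g_l=(q^2+1)x-\beta(q+1)$ using Lemma~\ref{z1z2}. Combining gives inequality (\ref{eqn_basiccountingargument}), and equality forces $g_l=0$ for every line $l$ with $m_l=0$; that is, every line of $S$ that is the intersection with $S$ of a plane of $\LL$ must lie on one of $\pi_1,\dots,\pi_\alpha$, which is the stated equality condition. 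The computations are routine; the only points needing care are the elementary $\PG(5,q)$ geometry behind the dichotomy for $\pi\cap S$ and the fact that $\pi$ meets $\pi_i$ in a line precisely when its trace on $S$ lies in $\pi_i$, together with the bookkeeping that isolates the $\sum_{l\in G}(w_l-1)g_l$ term and reads the equality case off the discarded quantity $\sum_{l:\,m_l=0}g_l$.
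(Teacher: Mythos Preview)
Your proof is correct and follows the same approach as the paper's. The paper's argument for (b) introduces the set $X$ of lines contained in at least one $\pi_i$ and counts triples $(i,l,\tau)$ with $\tau\cap S=l\subseteq\pi_i$; your $m_l$ is precisely the paper's $w_l$ extended from $G$ to all of $X$, and your splitting $\sum_l g_l m_l=\sum_{l\in G}(w_l-1)g_l+\sum_{l:\,m_l\ge1}g_l$ is exactly the paper's decomposition $\sum_{l\in X}w_lg_l=\sum_{l\in G}(w_l-1)g_l+\sum_{l\in X}g_l$, after which both arguments invoke Lemma~\ref{z1z2}.
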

\begin{proof}
(a) Count triples $(\pi_i,\pi_j,l)$ with $i\not=j$ and lines $l$ satisfying $l=\pi_i\cap\pi_j$ to find $\sum_{l\in G} w_l(w_l-1)=\alpha(\alpha-1)$. As $w_l\ge 2$ for $l\in G$, it follows that $\sum_{l\in G}(w_l-1)\le \frac12\alpha(\alpha-1)$.

(b) Let $X$ be the set of all lines that lie in at least one of the planes $\pi_i$ and define $w_l$ and $g_l$ for $l\in X$ as they are defined for $l\in G$. Then $G\subseteq X$. We count triples $(i,l,\tau)\in\{1,\dots,\alpha\}\times X\times\LL$ such that $\tau\cap S=l\subseteq \pi_i$. Each plane $\pi_i$ meets $x+\lambda_2$ planes of $\LL$ in a line and exactly $\beta-1$ of these lie in $S$. Hence, each $i$ occurs in exactly $x+\lambda_2+1-\beta$ such triples. Each line of $X$ occurs in exactly $w_lg_l$ such triples. Hence
\[
\alpha(x+\lambda_2+1-\beta)=\sum_{l\in X}w_lg_l=\sum_{l\in X}g_l+\sum_{l\in G}(w_l-1)g_l.
\]
Since $\sum_{l\in X}g_l$ is the number of planes $\LL$ that meet $S$ in a line of $X$, the statement now follows from Lemma \ref{z1z2}.
\end{proof}

\begin{Le}\label{atleastqplus1}
\begin{enumerate}
\renewcommand{\labelenumi}{\rm(\alph{enumi})}
\item There exists a solid that contains at least $q+1$ planes of $\LL$.
\item If every solid and every line is incident with at most $q+1$ planes of $\LL$, then $x=(q+1)^2$ and every line and every solid is incident with either no or exactly $q+1$ planes of $\LL$.
\end{enumerate}
\end{Le}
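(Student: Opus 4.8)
The plan is to combine two global double counts—one over solids, one over lines—with the local counting of Lemma~\ref{basiccountingargument}, and to close the remaining gap by the divisibility constraint of Lemma~\ref{incidenceswithpointsandhyperplanes_special_n5}. For a solid $S$ write $m_S$ for the number of planes of $\LL$ contained in $S$, and for a line $\ell$ write $n_\ell$ for the number of planes of $\LL$ through $\ell$. Counting ordered pairs of distinct planes of $\LL$ that meet in a line—each such pair meets in a unique line and spans a unique solid, and conversely two distinct planes of a solid meet in a line and span it—yields
\[
\sum_S m_S(m_S-1)=\sum_\ell n_\ell(n_\ell-1)=|\LL|(\lambda_2+x),
\]
together with $\sum_S m_S=\sum_\ell n_\ell=|\LL|\theta_2$, since each plane lies in $\theta_2$ solids and contains $\theta_2$ lines.

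For part (a) I would first observe that if every solid satisfied $m_S\le q$, then $m_S(m_S-1)\le(q-1)m_S$ would force $|\LL|(\lambda_2+x)\le(q-1)|\LL|\theta_2=|\LL|(q^3-1)$, i.e.\ $x\le q$; hence it suffices to prove $x>q$. Taking the solid $S$ that carries $\beta$ planes (allowed by the self-duality remark) and applying Lemma~\ref{basiccountingargument} with $\alpha=2$ to two planes $\pi_1,\pi_2\in\LL$ contained in $S$ and meeting in a line $\ell_0$, one has $w_{\ell_0}=2$ and, since every line lies in at most $\beta$ planes of $\LL$, $g_{\ell_0}\le\beta-2$. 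Substituting into (\ref{eqn_basiccountingargument}) and using $\lambda_2=q^3-q-1$ and $\beta\ge2$ gives $x\ge 2(q^3-1)/(q^2-1)=2(q^2+q+1)/(q+1)>2q$, so $x\ge 2q+1>q$ and some solid must carry at least $q+1$ planes.

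For part (b) the hypothesis together with (a) gives $\beta=q+1$, realised on a solid $S$. From $m_S\le q+1$ we obtain $m_S(m_S-1)\le q\,m_S$, hence $x\le(q+1)^2$, with equality exactly when every $m_S\in\{0,q+1\}$; the line identity gives the dual statement for the $n_\ell$. It remains to prove $x\ge(q+1)^2$. Here I would apply Lemma~\ref{basiccountingargument} with $\alpha=\beta=q+1$ and the sharp bound $g_\ell\le q+1-w_\ell$ (valid because all planes of $\LL$ inside $S$ are among the $\pi_i$ and $n_\ell\le q+1$); the resulting extremal estimate $\sum_{\ell\in G}(w_\ell-1)g_\ell\le\tfrac12 q(q^2-1)$, attained only when every $w_\ell=2$, yields $x\ge(q+1)^2-\tfrac12(q+1)$. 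Since $q+1\mid 2x$ by Lemma~\ref{incidenceswithpointsandhyperplanes_special_n5}, for even $q$ the only multiple of $q+1$ in the interval $[(q+1)^2-\tfrac12(q+1),(q+1)^2]$ is $(q+1)^2$, so $x=(q+1)^2$. Once $x=(q+1)^2$ the equality cases of the two identities force every solid and every line to be incident with $0$ or $q+1$ planes of $\LL$, which is the asserted structure.

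The main obstacle is the residual odd-$q$ case of part (b): there $q+1\mid 2x$ only gives $\tfrac{q+1}{2}\mid x$, and the same interval additionally contains $x=\tfrac12(q+1)(2q+1)$, which must be excluded. At this value every inequality above is tight, so the $q+1$ planes of $\LL$ in $S$ lie pairwise in distinct lines (no three through a common line) and each of the $\binom{q+1}{2}$ intersection lines carries exactly $q-1$ further planes of $\LL$ meeting $S$ in that line. I expect to rule this configuration out by a finer double count of the planes of $\LL$ meeting $S$, combining the point/hyperplane regularity of Lemma~\ref{incidenceswithpointsandhyperplanes_special_n5} with the assumption $q\ge25$ to reach a numerical contradiction; this is the step that will demand the most care.
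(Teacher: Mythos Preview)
Your treatment of (a) is correct, though a little more elaborate than necessary. The paper also starts from the identity $\sum_S m_S(m_S-1)=|\LL|(\lambda_2+x)$ and deduces $x+\lambda_2\le (c-1)\theta_2$ for $c:=\max_S m_S$; but then, instead of invoking Lemma~\ref{basiccountingargument}, it simply observes that $c=q$ would force $x\le q$, whence $q+1\mid 2x=2q$, impossible. Either route works.

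For (b) you follow the paper exactly up to the inequality $x\ge(q+1)^2-\tfrac12(q+1)$, and you correctly isolate the residual case $x=\tfrac12(q+1)(2q+1)$ for odd $q$. The gap is genuine, but the plan you sketch---a further global double count using point/hyperplane regularity together with $q\ge 25$---is not the right direction; no bound on $q$ is needed, and no new identity is required. What you are missing is the full force of the \emph{equality} clause of Lemma~\ref{basiccountingargument}~(b): once $\alpha=\beta=q+1$ and equality holds in \eqref{eqn_basiccountingargument}, every line of $S$ that lies in any plane of $\LL$ must already be a line of one of the $\pi_i$. Combined with your own observation that equality also forces all $w_h=2$, this is enough to finish.

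Concretely: pick any $h_0=\pi_i\cap\pi_j\in G$ and a plane $\tau$ of $S$ through $h_0$ with $\tau\notin\LL$ (there are $q-1\ge 1$ such planes, since $w_{h_0}=2$). Every plane of $\LL$ meeting $\tau$ in a line must meet it in one of the lines $\pi_k\cap\tau$; but $\pi_i\cap\tau=\pi_j\cap\tau=h_0$, so these give at most $q$ \emph{distinct} lines of $\tau$, each lying in at most $q+1$ planes of $\LL$. Hence $x\le q(q+1)<(q+1)^2-\tfrac12(q+1)$, a contradiction. This is essentially what the paper does (its final two sentences should be read with this choice of $\tau$, yielding the bound $q(q+1)$ rather than $(q+1)^2$); the point is that the contradiction comes from the local structure forced by equality, not from any additional averaging.
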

\begin{proof}
For each solid $S$ denote by $t_S$ the number of planes of $\LL$ contained in $S$. As every plane of $\LL$ is incident with $\theta_2$ solids, then $\sum_St_S=|\LL|\theta_2$. Counting triples $(\pi,\pi',S)$ of different planes $\pi,\pi'\in\LL$ such that $\pi\cap\pi'$ is a line and $S$ is the solid spanned by $\pi$ and $\pi'$ gives $\sum_St_S(t_S-1)=|\LL|(\lambda_2+x)$, since every plane of $\LL$ meets $\lambda_2+x$ planes of $\LL$ in a line. Put $c:=\max\{t_S\mid \mbox{$S$ is a solid}\}$. Then
\begin{eqnarray}
0&\le&\sum_St_S(c-t_S)=\sum_St_S(c-1)-\sum_St_S(t_S-1)\nonumber
\\
&=&(c-1)|\LL|\theta_2-|\LL|(x+\lambda_2)\label{eqn0orqplus1}
\\
&=&|\LL|((c-1)\theta_2-x-\lambda_2).\nonumber
\end{eqnarray}
It follows that $x+\lambda_2\le (c-1)\theta_2$. As $\lambda_2=q^3-q-1$, this implies that $c>q-1$. But $c=q$ leads to $x\le (c-1)\theta_2-\lambda_2=q$ and hence to $c=q=x$. As $q+1$ divides $2x$ (Lemma \ref{incidenceswithpointsandhyperplanes_special_n5}), this is impossible. Hence $c\ge q+1$. This proves (a).

For the proof of (b) suppose now that every line and every solid is incident with at most $q+1$ planes of $\LL$. Then $x+\lambda_2\le (c-1)\theta_2$ gives $x\le (q+1)^2$. If $x=(q+1)^2$, then (\ref{eqn0orqplus1}) shows that every solid contains either $0$ or $q+1$ planes of $\LL$. Dually, in this case, every line is incident with no or $q+1$ planes of $\LL$.

Assume that $x<(q+1)^2$. As $q+1$ divides $2x$ (Lemma \ref{incidenceswithpointsandhyperplanes_special_n5}), it follows that $x\le(q+1)^2-\frac12(q+1)$. To derive a contradiction we consider a solid $S$ that contains exactly $c=q+1$ planes $\pi_0,\dots,\pi_q$ of $\LL$. Lemma \ref{basiccountingargument} applied to $S$ with $\alpha=q+1$ gives
\[
(q+1)(x+\lambda_2-q)\le (q^2+1)x-(q+1)^2+{q+1\choose 2}(q-1),
\]
since in this lemma $g_h\le q+1-w_h\le q-1$ for all $h\in G$. It follows that $x\ge (q+1)^2-\frac12(q+1)$. Hence, we have equality. Therefore Lemma \ref{basiccountingargument} shows that every plane of $\LL$ that meets $S$ in a line has the property that this line lies in at least one of the planes $\pi_i$. Consider a plane $\tau$ of $S$ that does not belong to $\LL$. Then each plane of $\LL$ that meets $\tau$ in a line meets it in one of the lines $\pi_i\cap\tau$. Since every line lies in at most $q+1$ planes of $\LL$, it follows that $\tau$ meets at most $(q+1)^2$ planes of $\LL$ in a line. Hence $\lambda_2+x\le(q+1)^2$. But $\lambda_2=q^3-q-1$, a contradiction.
\end{proof}

\begin{Le}\label{betasmallorlarge}
Suppose that $x\le (q+1)^2$ and $q\ge 4$. Then $\beta= q+1$ or $\beta\ge \frac12(q^2+5)$.
\end{Le}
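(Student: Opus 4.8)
The plan is to reduce to a single solid and then to balance local incidence counts against the global quantities $x$ and $z$ coming from Lemmas \ref{incidenceswithpointsandhyperplanes_special_n5} and \ref{z1z2}. Since the hypotheses on $\LL$ are self-dual and $\beta$ is realised by a solid or by a line, I may assume there is a solid $S$ containing exactly $\beta$ planes $\pi_1,\dots,\pi_\beta$ of $\LL$. For a line $\ell$ of $S$ let $w_\ell$ be the number of the $\pi_i$ through $\ell$ and $g_\ell$ the number of planes of $\LL$ meeting $S$ in exactly $\ell$. Inside $S\cong\PG(3,q)$ two distinct planes meet in a line, so $w_\ell\le q+1$, counting the pairs $(\pi_i,\pi_j)$ gives $\sum_\ell w_\ell(w_\ell-1)=\beta(\beta-1)$, and Lemma \ref{z1z2} gives $\sum_\ell g_\ell=z=(q^2+1)x-\beta(q+1)$.

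The decisive extra input is a local identity. If $\rho$ is a plane of $S$ not in $\LL$, then $\rho$ meets all $\beta$ planes $\pi_i$ in a line of $S$, so the condition that $\rho$ meets exactly $x$ planes of $\LL$ in a line forces $\sum_{\ell\subseteq\rho}g_\ell=x-\beta$. In particular $g_\ell\le x-\beta$ whenever $\ell$ lies in some plane of $S$ outside $\LL$, that is, whenever $w_\ell\le q$; and $g_\ell\le\beta-(q+1)$ on the ``full pencil'' lines with $w_\ell=q+1$, since such a line already lies in $q+1$ planes of $\LL\cap S$. These refined caps are what make the lines with large $w_\ell$ harmless.

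Feeding $\pi_1,\dots,\pi_\beta$ into Lemma \ref{basiccountingargument} with $\alpha=\beta$ yields the master inequality $\beta(x+\lambda_2+1-\beta)\le z+M$, where $M=\sum_{\ell\in G}(w_\ell-1)g_\ell$, and the whole game is to bound $M$ well. For small $\beta$ the crude per-line cap $g_\ell\le\beta-w_\ell$ together with $\sum_{\ell\in G}(w_\ell-1)\le\tfrac12\beta(\beta-1)$ already gives $M\le\tfrac12\beta(\beta-1)(\beta-2)$, and, with $x\le(q+1)^2$ and $q\ge4$, the master inequality is then violated. For larger $\beta$ this cubic is far too generous (it is attained only by dual caps, whose $g_\ell$ are in fact tiny), and there I would instead use $M\le q\sum_\ell g_\ell\le qz$ refined by the caps above: by the budget $\sum_\ell w_\ell(w_\ell-1)=\beta(\beta-1)$ there are at most $\beta(\beta-1)/\big(q(q+1)\big)$ full pencils, on them $g_\ell\le\beta-q-1$, while all other lines obey $g_\ell\le x-\beta$. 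A separate, purely geometric, dichotomy disposes of the structured extreme: if essentially the whole budget sits on full pencils, then every two of the $\pi_i$ share a line all of whose $q+1$ planes lie in $\LL$, so the dual point set of $S$ is closed under its own lines, hence is a projective subspace, forcing $\beta=q+1$ or $\beta\ge\theta_2=q^2+q+1>\tfrac12(q^2+5)$. Combining the two regimes should leave no admissible $\beta$ strictly between $q+1$ and $\tfrac12(q^2+5)$.

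The hard part will be the estimate of $M$ in the middle range and getting the constant to land exactly at $\tfrac12(q^2+5)$: the per-line cap alone is too weak, the global mass bound $\sum_\ell g_\ell\le z$ alone is too weak, and only their simultaneous use, weighted by $w_\ell$ and constrained by both the pencil budget and $x\le(q+1)^2$, produces the correct threshold. I expect the bookkeeping that interpolates between the ``spread'' configurations (where $M$ is governed by $z$) and the ``subspace-like'' configurations (where $\beta$ is pushed up to $\theta_2$) to be the genuine obstacle, since it is precisely the boundary between these regimes that fixes the value $\tfrac12(q^2+5)$.
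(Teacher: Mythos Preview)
Your setup and the small-$\beta$ regime are exactly what the paper does: pass to a solid $S$ realising $\beta$, apply Lemma~\ref{basiccountingargument} with $\alpha=\beta$, bound $g_\ell\le\beta-w_\ell$, and obtain $M\le\tfrac12\beta(\beta-1)(\beta-2)$, which rules out $q+2\le\beta<2q$ once $x\le(q+1)^2$ and $q\ge4$. So far so good.

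The gap is the large-$\beta$ regime. What you have written there is a plan, not a proof: you introduce the refined caps $g_\ell\le x-\beta$ for $w_\ell\le q$ and $g_\ell\le\beta-q-1$ for full pencils, propose to budget the full pencils by $\beta(\beta-1)/\bigl(q(q+1)\bigr)$, and then invoke a structural dichotomy (``if essentially the whole budget sits on full pencils then the dual set is a subspace''). None of this is carried through to an inequality that actually excludes $2q\le\beta<\tfrac12(q^2+5)$, and you say as much yourself. Worse, the dichotomy only works when \emph{every} pair $\pi_i,\pi_j$ meets in a full-pencil line; ``essentially the whole budget'' does not give that, so you would still need a quantitative interpolation, which is precisely the bookkeeping you flag as the obstacle.

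The paper sidesteps this entire difficulty by a move you have overlooked: Lemma~\ref{basiccountingargument} allows any $\alpha\le\beta$, not just $\alpha=\beta$. For $\beta\ge 2q$ one simply fixes $\alpha=2q$. Then the bound $M\le\tfrac12\alpha(\alpha-1)\beta-\alpha(\alpha-1)$ is \emph{linear} in $\beta$, and the master inequality becomes
\[
2q\Bigl(x+\lambda_2-\beta+2q-\tfrac12(2q-1)\beta\Bigr)\le (q^2+1)x-(q+1)\beta,
\]
which, after substituting $\lambda_2=q^3-q-1$ and $x\le(q+1)^2$, solves directly to $\beta>\tfrac12(q^2+4)$ for $q\ge4$. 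No refined caps, no pencil budgets, no subspace dichotomy are needed; the threshold $\tfrac12(q^2+5)$ falls out of one line of algebra. Your refined caps are correct and might be useful elsewhere, but for this lemma the missing idea is simply to decouple $\alpha$ from $\beta$.
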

\begin{proof}
By passing through the dual space, if necessary, we may assume that some solid $S$ contains $\beta$ planes of $\LL$. We know from the previous lemma that $\beta\ge q+1$. Let $\alpha$ be an integer with $q+1\le \alpha\le \beta$ and let $\pi_1,\dots,\pi_\alpha$ be $\alpha$ different planes of $\LL$ that are contained in $S$. We will apply Lemma \ref{basiccountingargument} to these planes using that every line is contained in at most $\beta$ planes of $\LL$. Hence in Lemma \ref{basiccountingargument} we have $g_l\le \beta-w_l$ for $l\in G$, which implies, using part (a) of Lemma \ref{basiccountingargument}, that
\begin{eqnarray*}
\sum_{h\in G}(w_h-1)g_h\le\sum_{h\in G}(w_h-1)(\beta-w_h)= \frac{1}{2}\alpha(\alpha-1)\beta-\alpha(\alpha-1).
\end{eqnarray*}
Therefore part (b) of Lemma \ref{basiccountingargument} gives
\begin{eqnarray}\label{eqn_basic}
\alpha(x+\lambda_2-\beta+\alpha-\frac12(\alpha-1)\beta)\le (q^2+1)x-(q+1)\beta.
\end{eqnarray}
Case 1. We assume that $\beta< 2q$. In this case we choose $\alpha=\beta$ and find
\[
\beta(x+\lambda_2+q+1-\frac12(\beta-1)\beta)\le (q^2+1)x.
\]
Since $\lambda_2=q^3-q-1$ and $x\le (q+1)^2$, it follows that $f(\beta)\ge 0$ for the polynomial $f$   defined by
\[
f:=(q^2+1)(q+1)^2-x\left(q^3+(q+1)^2-\frac12(x-1)x\right)\in\mathbb{R}[x].
\]
As $q\ge 4$ we have $f(q+2)=-(q^3-q^2-2q-2)/2<0$ and $f'(x)=-q^3-(q+1)^2+\frac{3}{2}x^2-x<0$ for $q+2\le x<2q$. Hence $f(x)<0$ for all $x\in\mathbb{R}$ with $q+2\le x< 2q$. Since $f(\beta)\ge 0$ and since we assume that $\beta< 2q$ in this case, it follows that $\beta\le q+1$, and hence, using Lemma \ref{atleastqplus1}, that $\beta=q+1$.

Case 2. We assume that $\beta\ge 2q$. Then we choose $\alpha=2q$. Since  $\lambda_2=q^3-q-1$ and $x\le (q+1)^2$ we find from (\ref{eqn_basic})
\[
2q(q^3+q^2+3q-\frac12(2q+1)\beta)\le (q^2+1)(q+1)^2-(q+1)\beta.
\]
For $q\ge 4$, this gives $\beta>(q^2+4)/2$. Hence $\beta\ge \frac12(q^2+5)$.
\end{proof}

\begin{Le}\label{gammalower}
Suppose that $q\ge 4$ and that  is a solid that contains $\beta$ planes of $\LL$. There exists a plane $\tau_0\in\LL$ that lies in $S$, three distinct lines $\ell_1$, $\ell_2$ and $\ell_3$ of $\tau_0$, and an integer
\[
\gamma\ge \frac{\beta(q^3-q-\beta-x)}{(q^2+1)x-(q+1)\beta}
\]
such that each line $\ell_1$, $\ell_2$ and $\ell_3$ lies on at least $\gamma$ planes of $\LL$ that are contained in $S$.
\end{Le}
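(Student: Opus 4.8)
The plan is to work entirely inside the fixed solid $S$ and to run a single global counting argument over the lines of $S$, played off against the extremality hypothesis $\beta\le x$ from (\ref{inequality_beta}). For a line $\ell$ of $S$ write $m_\ell$ for the number of planes of $\LL$ contained in $S$ that pass through $\ell$, and $g_\ell$ for the number of planes of $\LL$ that meet $S$ in exactly the line $\ell$. Since two distinct planes of the three-dimensional space $S$ always meet in a line, each plane $\tau_0\in\LL$ with $\tau_0\subset S$ meets the other $\beta-1$ planes of $\LL$ in $S$ in one of its lines, and hence meets exactly $\lambda_2+x-(\beta-1)$ planes of $\LL$ lying outside $S$ in a line. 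First I would record the two identities that drive everything: counting the pairs $(\tau_0,\pi)$ with $\tau_0\in\LL$, $\tau_0\subset S$, $\pi\in\LL$, $\pi\not\subset S$ and $\pi\cap\tau_0$ a line gives
\[
\sum_\ell g_\ell m_\ell=\beta(\lambda_2+1-\beta+x),
\]
while Lemma \ref{z1z2} gives $\sum_\ell g_\ell=z:=(q^2+1)x-(q+1)\beta$. Finally, by the maximality defining $\beta$, every line lies in at most $\beta$ planes of $\LL$, so $m_\ell+g_\ell\le\beta$, that is $g_\ell\le\beta-m_\ell$, for every line $\ell$ of $S$.

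With these in hand the argument becomes a clean contradiction. Put $\gamma_0:=\beta(q^3-q-\beta-x)/z$ and let $R$ be the set of lines $\ell$ of $S$ with $m_\ell\ge\gamma_0$. I would first verify the two admissibility bounds $0<\gamma_0\le\beta$: the left one follows from $q\ge4$ together with the standing bound $\beta\le x\le(q+1)^2$ of (\ref{n5_bound_on_x}), and the right one from $x\ge\beta\ge q+1$ (Lemma \ref{atleastqplus1}). Now suppose, for contradiction, that no plane of $\LL$ in $S$ contains three lines of $R$. Then each of the $\beta$ planes of $\LL$ in $S$ contains at most two lines of $R$, so counting incidences gives $\sum_{\ell\in R}m_\ell\le 2\beta$. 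Splitting $\sum_\ell g_\ell m_\ell$ over $R$ and its complement, using $g_\ell\le\beta-\gamma_0$ on $R$ (valid because $m_\ell\ge\gamma_0$ there) and $m_\ell\le\gamma_0$ on the complement together with $\sum_\ell g_\ell=z$, yields
\[
\beta(\lambda_2+1-\beta+x)\le 2\beta(\beta-\gamma_0)+\gamma_0 z .
\]
Substituting $\gamma_0 z=\beta(\lambda_2+1-\beta-x)$ and cancelling collapses this to $x+\gamma_0\le\beta$, contradicting $\beta\le x$ since $\gamma_0>0$. Hence some plane $\tau_0\in\LL$ with $\tau_0\subset S$ carries at least three (automatically distinct) lines $\ell_1,\ell_2,\ell_3$ with $m_{\ell_i}\ge\gamma_0$; taking $\gamma:=\lceil\gamma_0\rceil$, which is an integer at least the displayed bound and satisfies $m_{\ell_i}\ge\gamma$ by integrality, completes the proof.

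The step I expect to carry the weight is the choice of the \emph{sharp} multiplicity bound on the rich lines. Using only the crude estimate $g_\ell\le\beta$ on $R$ produces the much weaker conclusion $x\le\beta$, which is merely consistent with $\beta\le x$ and so fails to give a contradiction; it is precisely the refinement $g_\ell\le\beta-m_\ell\le\beta-\gamma_0$ on the lines of $R$ that supplies the extra $-2\beta\gamma_0$, turns the estimate into $x+\gamma_0\le\beta$, and simultaneously explains the sign ``$-x$'' in the numerator of the stated bound for $\gamma$. A secondary, but necessary, piece of bookkeeping is the verification that $\gamma_0$ lands in the admissible range $(0,\beta]$, so that both the family $R$ and the conclusion are meaningful; this is exactly where the hypothesis $q\ge4$ enters.
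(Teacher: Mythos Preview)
Your argument is correct, and it reaches the same bound as the paper, but the route is genuinely different.

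The paper works on the \emph{external} side: it lists the planes $\pi_1,\dots,\pi_n\in\LL$ meeting $S$ in a line, sorts their trace multiplicities $\gamma_i$, and takes the smallest prefix whose sum exceeds $2\beta x$. The tail estimate $\sum_i\gamma_i\le 2\beta x+(n+1-s)\gamma_s$ then gives $\gamma:=\gamma_s\ge\beta(q^3-q-\beta-x)/n$; a pigeonhole over the $>2\beta x$ incidences, together with the per-line cap $x$, forces three distinct trace lines in some $\tau_0$. Your proof stays on the \emph{internal} side: you threshold on the line multiplicities $m_\ell$ directly, and the contradiction $x+\gamma_0\le\beta$ drops out of one split of $\sum_\ell g_\ell m_\ell$ using the sharper cap $g_\ell\le\beta-m_\ell$. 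The paper never uses $m_\ell+g_\ell\le\beta$ in this lemma; conversely, your argument never needs the ordering or the partial-sum trick. Your version is shorter and makes the appearance of the ``$-x$'' in the numerator transparent, while the paper's construction has the minor advantage that $\gamma$ is produced as an actual multiplicity $\gamma_s$ rather than as a ceiling. Either way the statement follows.
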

\begin{proof}
Let $T$ be the set of the $\beta$ planes of $S$ that are contained in $\LL$, and let $X$ be the set of all planes $\pi$ of $\LL$ for which $S\cap\pi$ is a line. Lemma \ref{z1z2} shows that $n:=|X|=(q^2+1)x-(q+1)\beta$. Each plane of $T$ meets $x+\lambda_2$ planes of $\LL$ in a line, and exactly $x+\lambda_2+1-\beta$ of these planes are of $X$. Hence there are exactly $\beta(x+\lambda_2+1-\beta)$ pairs $(\tau,\pi)\in T\times X$ such that $\pi\cap \tau$ is a line. Put $X=\{\pi_1,\dots,\pi_n\}$ and let $\gamma_i$ be the number of planes in $T$ that contain the line $S\cap \pi_i$. We may assume that $\gamma_i\ge \gamma_j$ for $i\le j$. We have $\beta(x+\lambda_2+1-\beta)=\sum \gamma_i$.

From (\ref{n5_bound_on_x}) and (\ref{inequality_beta}) and $q\ge 4$  we have $\lambda_2=q^3-q-1>2(q+1)^2\ge x+\beta$. Hence $\sum\gamma_i>2\beta x$ and thus there exists  a smallest positive integer $s$ such that $\sum_{i=1}^s\gamma_i>2\beta x$. Put $\gamma:=\gamma_s$. Then
\begin{eqnarray*}
\beta(x+\lambda_2+1-\beta)=\sum_{i=1}^n\gamma_i\le 2\beta x+(n+1-s)\gamma.
\end{eqnarray*}
As $s\ge 1$ and $\lambda_2=q^3-q-1$, it follows that $\beta(q^3-q-\beta-x)\le n\gamma$, that is $\gamma$ satisfies the inequality in the statement.

Since $\sum_{i=1}^s\gamma_i>2\beta x$, there are more than $2\beta x$ pairs $(\tau,\pi)$ with $\tau\in T$ and $\pi\in\{\pi_1,\dots,\pi_s\}$ such that $\tau\cap\pi$ is a line. Since $|T|=\beta$ it follows that there exists a plane $\tau\in T$ that occurs in more than $2 x$ pairs $(\tau,\pi)$ with $\pi\in\{\pi_1,\dots,\pi_s\}$. Since each line lies in at most $x$ planes of $\LL$, each line of $\tau$ lies in at most $x$ of the planes of $\{\pi_1,\dots,\pi_s\}$. It follows that $\tau$ meets three planes $\pi_i$, $\pi_j$ and $\pi_k$ with $1\le i<j<k\le s$ in distinct lines. Then $\gamma_i,\gamma_j,\gamma_k\ge \gamma_s=\gamma$, so each of these three lines of $\tau$ lies in at least $\gamma$ planes of $T$.
\end{proof}

\begin{Le}\label{gammaupper}
Suppose that $S$ is a solid that contains $\beta$ planes of $\LL$, let $\tau$ be one of these planes, let $\ell_1$, $\ell_2$ and $\ell_3$ be three distinct lines of $\tau$, and suppose that $\gamma\ge 3$ is an integer such that each of these three lines lies in at least  $\gamma$ planes of $\LL$ that are contained in $S$. Then
\[
(3\gamma-2)(x+q^3-q-2\beta)\le (q^2+1)x-\beta(q+1)-7\beta+3\theta_3.
\]
\end{Le}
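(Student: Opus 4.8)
The plan is to run a two-sided incidence count between a carefully chosen family of $3\gamma-2$ planes of $\LL$ lying in $S$ and the planes of $\LL$ that are \emph{not} contained in $S$, very much in the spirit of Lemma \ref{basiccountingargument}. First I would fix the combinatorial skeleton. The three lines $\ell_1,\ell_2,\ell_3$ all lie in the plane $\tau$, so any two of them span $\tau$; hence they are either concurrent in a single point or form a triangle, and in both cases the pairwise intersection points form a set of at most three points of $\tau$. For each $i$ I choose $\gamma$ planes of $\LL$ contained in $S$ through $\ell_i$, always including $\tau$. Since a plane of $\LL$ containing two of the $\ell_i$ must contain $\tau$ and hence equal $\tau$, the resulting family $F$ of planes of $\LL$ in $S$ satisfies $|F|=3\gamma-2$.

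Next I would produce the left-hand side as a lower bound. Fix $\sigma\in F$ and a line $\ell_i\subseteq\sigma$ among the three. The plane $\sigma$ meets $x+\lambda_2=x+q^3-q-1$ planes of $\LL$ in a line. From these I discard the at most $\beta-1$ planes of $\LL$ lying in $S$ (in a solid any two distinct planes meet in a line) and the at most $\beta-1$ planes of $\LL$ distinct from $\sigma$ that contain $\ell_i$ (every line lies in at most $\beta$ planes of $\LL$, and such planes meet $\sigma$ exactly in $\ell_i$). Since both discarded families have size $<\beta$, at least $x+q^3-q-2\beta$ planes survive, and these are planes of $\LL$ outside $S$ meeting $\sigma$ in a line different from $\ell_i$. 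Summing over $F$ produces the factor $3\gamma-2$ together with the term $x+q^3-q-2\beta$.

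Finally I would bound the same total from above by counting from the side of the planes $\pi\in\LL$ not contained in $S$. Such a $\pi$ meets $S$ in a line $m=\pi\cap S$ (otherwise it contributes nothing) and is counted once for every $\sigma\in F$ with $m\subseteq\sigma$ and $m\neq\ell_i$. By Lemma \ref{z1z2} there are exactly $z=(q^2+1)x-(q+1)\beta$ planes of $\LL$ meeting $S$ in a line, so the ``generic'' contribution, where each such $\pi$ is counted at most once, is bounded by $z$; the whole difficulty is the over-count coming from planes $\pi$ whose trace $m$ lies in two or three planes of $F$. A short coplanarity argument shows that $m$ can lie in more than one plane of $F$ only when $m$ passes through one of the at most three pairwise intersection points of $\ell_1,\ell_2,\ell_3$; in the triangle case this moreover forces the multiplicity to be exactly two, while in the concurrent case the single common point may carry multiplicity three along the $\theta_2$ lines through it. Since by Lemma \ref{incidenceswithpointsandhyperplanes_special_n5} each point of $\PG(5,q)$ lies in $(q^2+1)x/(q+1)\le\theta_3$ planes of $\LL$ (this is where $x\le(q+1)^2$ enters), the over-count is governed by $3\theta_3$, and subtracting the planes of $\LL$ already inside $S$ through these special points is what should sharpen $3\theta_3$ down by the linear correction $-7\beta$.

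The main obstacle is exactly this last bookkeeping. The crude estimate (bounding each over-counted trace by $z$, or by $3\theta_3$ with a factor from the multiplicity) is too lossy by a constant factor, so the constants $3\theta_3$ and $7\beta$ only emerge after one carefully subtracts, without double counting, the planes of $\LL$ lying in $S$ that pass through the special points, and after one reconciles the concurrent case (multiplicity three concentrated on the lines through one point) with the triangle case (multiplicity two spread over lines through three points) so that both lead to the same clean bound. I expect the delicate part to be arranging these subtractions so that the two cases balance to give precisely the stated right-hand side rather than a weaker multiple of $\theta_3$.
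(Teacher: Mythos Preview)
Your plan is essentially the paper's proof: choose the $3\gamma-2$ planes ($\tau$ together with $\gamma-1$ further planes of $\LL$ in $S$ through each $\ell_i$), run the double count of Lemma~\ref{basiccountingargument} with $\alpha=3\gamma-2$, and then split into the concurrent versus triangle configuration of $\ell_1,\ell_2,\ell_3$ to bound the overcount $\sum_{h\in G}(w_h-1)g_h$, using that every multiply-covered line passes through one of at most three special points, each lying on at most $\theta_3$ planes of $\LL$. The only organisational difference is that the paper keeps the $\ell_i$-contribution on the right (bounding $g_{\ell_i}\le\beta-\gamma$) rather than discarding it on the left as you do, and it finds that the triangle case gives the weaker and hence binding estimate; the ``delicate bookkeeping'' you anticipate is exactly the short case analysis the paper carries out to reach $-7\beta+3\theta_3$.
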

\begin{proof}
Put $\alpha=3\gamma-2$ and consider $\alpha$ planes $\pi_1,\dots,\pi_\alpha\in\LL$ of $S$ such that $\pi_1=\tau$ and such that each line $\ell_i$ lies in exactly $\gamma$ of these planes. We will apply Lemma \ref{basiccountingargument} and use the notation there. We distinguish two cases in order to give an upper bound for the sum $\sum (w_h-1)g_h$ in (\ref{eqn_basiccountingargument}).

Case 1. The three lines $\ell_i$ contain a common point $P$ of $\tau$. Then $w_{\ell_i}=\gamma$ and $w_h\le 3$ for every other line $h$ of $G$. The sum $\sum_{h\in G}g_h$ is the number of planes of $\LL$ that meet $S$ in a line of $G$. Since all lines of $G$ contain $P$ and since $P$ lies on at most $\theta_3$ planes of $\LL$ (Lemma \ref{incidenceswithpointsandhyperplanes_special_n5}), we see that the sum is at most $\theta_3-\alpha$. Since every line lies in at most $\beta$ planes of $\LL$, we have $g_{\ell_i}\le\beta-\gamma$. Hence
\begin{eqnarray*}
\sum_{h\in G}(w_h-1)g_h
&\le& (\gamma-3)(g_{\ell_1}+g_{\ell_2}+g_{\ell_3})+2\sum_{h\in G}g_h
\\
&\le & 3(\gamma-3)(\beta-\gamma)+2(\theta_3-\alpha).
\end{eqnarray*}

Case 2. The three lines $\ell_i$ form a triangle with three intersection points $P_1,P_2,P_3$ in $\tau$. Then $w_{\ell_i}=\gamma$ and $w_h= 2$ for every other line $h$ of $G$. The sum $\sum_{h\in G}g_h$ is the number of planes of $\LL$ that meet $S$ in a line of $G$. All lines of $G$ contain one of the points $P_1$, $P_2$ and $P_3$. Each of these points $P_i$ lies in at most $\theta_3$ planes of $\LL$ (Lemma \ref{incidenceswithpointsandhyperplanes_special_n5}) and lies on exactly $2\gamma-1$ of the planes $\pi_1,\dots,\pi_\alpha$. It follows that $\sum_{h\in G}g_h$ is at most  $3(\theta_3-(2\gamma-1))-g_{\ell_1}-g_{\ell_2}-g_{\ell_3}$. Since every line lies in at most $\beta$ planes of $\LL$, we have $g_{\ell_1}\le\beta-\gamma$. Hence
\begin{eqnarray*}
\sum_{h\in G}(w_h-1)g_h
&=& (\gamma-2)(g_{\ell_1}+g_{\ell_2}+g_{\ell_3})+\sum_{h\in G}g_h
\\
&\le& 3(\gamma-3)(\beta-\gamma)+3(\theta_3+1-2\gamma).
\end{eqnarray*}
As $\alpha=3\gamma-2$, the bound in Case 2 is weaker than the one in Case 1, so we can continue with the bound in Case 2, which we write in the following form
\[
\sum_{h\in G}(w_h-1)g_h \le  (3\gamma-2)(\beta-\gamma+\frac13)-7\beta+\frac{11}3+3\theta_3.
\]
With this bound we apply (\ref{eqn_basiccountingargument}) of Lemma \ref{basiccountingargument}. As $\alpha=3\gamma-2$ this gives
\[
(3\gamma-2)(x+\lambda_2+1-2\beta+\gamma-\frac13)\le (q^2+1)x-\beta(q+1)-7\beta+\frac{11}3+3\theta_3.
\]
As $\gamma\ge 3$ and $\lambda_2=q^3-q-1$, this proves the assertion of the lemma.
\end{proof}

\begin{Le}\label{betanottoolarge}
If $q\ge 25$, then $\beta<\frac12(q^2+5)$.
\end{Le}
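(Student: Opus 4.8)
The plan is to derive a contradiction from the assumption $\beta\ge\frac12(q^2+5)$ by confronting the lower bound on $\gamma$ coming from Lemma~\ref{gammalower} with the upper bound coming from Lemma~\ref{gammaupper}. Since the hypotheses on $\LL$ are selfdual, I may pass to the dual space if necessary and assume that some solid $S$ contains exactly $\beta$ planes of $\LL$; both lemmas then apply to $S$. Lemma~\ref{gammalower} supplies a plane $\tau_0\in\LL$ of $S$ together with three distinct lines of $\tau_0$, each lying on at least $\gamma$ planes of $\LL$ contained in $S$, where $\gamma$ is an integer with $\gamma\ge\gamma_0:=\beta(q^3-q-\beta-x)/D$ and $D:=(q^2+1)x-(q+1)\beta>0$. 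Using $\beta\ge\frac12(q^2+5)$ and $x\le(q+1)^2$ one checks that $\gamma_0$ is of order $\tfrac12 q$, so in particular $\gamma\ge 3$ and Lemma~\ref{gammaupper} is applicable.

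Writing $A:=x+q^3-q-2\beta$ and $N_U:=(q^2+1)x-\beta(q+8)+3\theta_3$ for the two sides produced by Lemma~\ref{gammaupper}, and noting $A>0$ because $x\ge\beta$, the inequality of that lemma reads $(3\gamma-2)A\le N_U$. Substituting $\gamma\ge\gamma_0$ and clearing the positive denominator $D$ turns the two bounds into the single inequality $P\le 0$, where $P:=3\beta(q^3-q-\beta-x)A-2DA-N_U D$; equivalently, with $\gamma_1:=\tfrac23+N_U/(3A)$ the upper bound for $\gamma$, one has $P=3DA\,(\gamma_0-\gamma_1)$, so the contradiction will follow once I show $P>0$, i.e. $\gamma_0>\gamma_1$, throughout the feasible region $\frac12(q^2+5)\le\beta\le x\le(q+1)^2$.

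To make the verification manageable I reduce the two free parameters to the worst corner. For fixed $\beta$ one checks that $\gamma_0$ is decreasing in $x$ and, from the sign of $(q^2+1)A-N_U$ (dominated by its $q^5$ term), that $\gamma_1$ is increasing in $x$; hence the gap $\gamma_0-\gamma_1$ is decreasing in $x$ and it suffices to treat $x=(q+1)^2$. There $A=\theta_3-2\beta$ and $P$ becomes a cubic in $\beta$ with leading term $6\beta^3$; its derivative is dominated by a $3q^6$ term and is therefore positive on $[\frac12(q^2+5),(q+1)^2]$, so $P$ is increasing and it remains only to check $P>0$ at the left endpoint $\beta=\frac12(q^2+5)$, where $P$ reduces to an explicit polynomial in $q$.

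The main obstacle is precisely this final polynomial estimate and the pinning down of the threshold $q\ge 25$. The leading-order picture is favourable: writing $\beta\sim c_1q^2$ and $x\sim c_2q^2$ with $c_1\ge\frac12$ and $c_2\le 1$, the dominant part of $P$ is $(3c_1-c_2^2)q^8$, which is at least $\frac12 q^8>0$; at the critical corner itself one finds $P=\frac12 q^8+O(q^7)$. The real work is to bound the lower-order terms so that positivity is retained all the way down to $q=25$, and it is exactly these $O(q^7)$ corrections that force the hypothesis $q\ge 25$ rather than some smaller value. Alongside this I must confirm that the monotonicity statements used in the reduction (the signs of the relevant derivatives in $x$ and in $\beta$) really hold across the whole range for $q\ge 25$; these, however, reduce to elementary sign checks dominated by a single high-degree term and present no serious difficulty.
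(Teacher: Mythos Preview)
Your plan is correct and follows the same route as the paper: combine Lemma~\ref{gammalower} with Lemma~\ref{gammaupper}, eliminate~$x$ by passing to the worst case $x=(q+1)^2$, and analyse the resulting polynomial in~$\beta$. The paper streamlines the computation in two small ways you do not use. First, instead of a monotonicity argument in~$x$ it notes that $\gamma\le q+1$ (a line of the solid $S$ lies on at most $q+1$ planes of $S$), so $3\gamma-2\le q^2+1$ and one may replace $x$ by $(q+1)^2$ directly in the inequality of Lemma~\ref{gammaupper}. Second, in the lower bound it replaces $\beta+x$ by $2(q+1)^2$ inside the numerator of $\gamma_0$, which kills one factor of~$\beta$. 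With these simplifications the paper arrives at a \emph{quadratic} $g(\beta)$ with positive leading coefficient and checks $g<0$ at the two endpoints $\beta=\tfrac12(q^2+5)$ and $\beta=(q+1)^2$; this avoids your derivative-in-$\beta$ step. Your cubic version works as well, but the paper's reduction keeps the final polynomial verification shorter.
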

\begin{proof}
We may assume that there exists a solid with $\beta$ planes of $\LL$. Since $\beta\le x\le (q+1)^2$, Lemma \ref{gammalower} gives an integer
\[
\gamma\ge f(\beta):=\frac{\beta(q^3-q-2(q+1)^2)}{(q^2+1)(q+1)^2-(q+1)\beta}
\]
such that $S$ has a plane and three lines in that plane that lie each on at least $\gamma$ planes of $\LL$ that are contained in $S$. Assume that $\beta\ge\frac12(q^2+5)$. Since $q\ge 25$, it follows that
\[
\gamma\ge f(\beta)\ge f(\frac12(q^2-1))=\frac{(q-1)(q^2-3q-2)}{2q^2-q+3}\ge 3.
\]
Lemma \ref{gammaupper} gives the upper bound \[
(3\gamma-2)(x+q^3-q-2\beta)\le (q^2+1)x-\beta(q+1)-7\beta+3\theta_3.
\]
Since each line of $S$ lies on at most $q+1$ planes of $S$, then $\gamma\le q+1$ and hence $3\gamma-2\le 3q-1\le q^2+1$, that is the coefficient of $x$ on the right hand side is larger than the one on the left hand side. Since $x\le (q+1)^2$, it follows that
\[
(3f(\beta)-2)((q+1)^2+q^3-q-2\beta)\le (q^2+1)(q+1)^2-\beta(q+1)-7\beta+3\theta_3.
\]
Multiplying this with the denominator of $f(\beta)$ gives $g(\beta)\ge 0$ for the polynomial
\begin{eqnarray*}
g(y)&:=&(q+1)[(6q^2-17q)y^2+(-3q^5+4q^4-8q^3-8q^2-5q-12)y
\\
&&+q^7+8q^6+15q^5+22q^4+27q^3+20q^2+13q+6].
\end{eqnarray*}
This is a polynomial of degree two with
\begin{eqnarray*}
f((q+1)^2)&=&-(q+1)^3(2q^5-16q^4+11q^3+24q^2+21q+6)
\end{eqnarray*}
and
\begin{eqnarray*}
f(\frac{q^2+5}2)=-\frac14(q+1)(2q^7-46q^6+3q^5-172q^4+152q^3-126q^2+423q+96).
\end{eqnarray*}
Since $q\ge 25$ we see that $f(\frac{q^2+5}2)<0$ and $f((q+1)^2)<0$ and hence $f(y)<0$ for all $y$ with
$\frac{q^2+5}2\le y\le (q+1)^2$. As $\frac{q^2+5}2\le \beta\le (q+1)^2$ and $g(\beta)\ge 0$, this is a contradiction.
\end{proof}

\begin{Le}
If $q\ge 25$, then $\beta=q+1$ and $x=(q+1)^2$. Also every solid and every line of $\PG(5,q)$ is incident with either $0$ or $q+1$ planes of $\LL$
\end{Le}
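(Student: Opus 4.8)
The plan is to read off $\beta=q+1$ from the two immediately preceding lemmas and then feed the resulting incidence bound into Lemma \ref{atleastqplus1}(b). Since $q\ge 25$ in particular forces $q\ge 4$, and the standing hypothesis (\ref{n5_bound_on_x}) guarantees $x\le (q+1)^2$, Lemma \ref{betasmallorlarge} applies and yields the dichotomy $\beta=q+1$ or $\beta\ge\frac12(q^2+5)$. Lemma \ref{betanottoolarge} rules out the second alternative for $q\ge 25$, so I would conclude immediately that $\beta=q+1$.

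The second step is to convert this into the hypothesis required by Lemma \ref{atleastqplus1}(b). By its very definition $\beta$ is the largest number of planes of $\LL$ that any single line or solid can carry; hence the equality $\beta=q+1$ is precisely the assertion that every line and every solid of $\PG(5,q)$ is incident with at most $q+1$ planes of $\LL$. Part (b) of Lemma \ref{atleastqplus1} then delivers both $x=(q+1)^2$ and the statement that every line and every solid is incident with either $0$ or exactly $q+1$ planes of $\LL$, which together with $\beta=q+1$ is the full assertion to be proved.

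Because the genuinely delicate estimates have already been carried out in Lemmas \ref{betasmallorlarge} and \ref{betanottoolarge}, the argument here is almost purely organizational, and I do not expect a real obstacle. The one point deserving a moment's care is the logical equivalence in the second step: one must recall that $\beta$ was introduced as a \emph{maximum} over all lines and solids, so that the single equality $\beta=q+1$ automatically supplies the uniform upper bound that part (b) of Lemma \ref{atleastqplus1} demands. Confirming that the hypotheses $q\ge 4$ and $x\le (q+1)^2$ of the invoked lemmas are indeed in force completes the necessary checks, and the lemma follows.
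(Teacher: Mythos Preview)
Your proposal is correct and follows exactly the paper's own argument: combine Lemmas \ref{betasmallorlarge} and \ref{betanottoolarge} to force $\beta=q+1$, then invoke Lemma \ref{atleastqplus1}(b). The only difference is that you spell out the routine verifications (hypotheses $q\ge 4$, $x\le (q+1)^2$, and that $\beta$ is a maximum) which the paper leaves implicit.
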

\begin{proof}
From Lemma \ref{betasmallorlarge} and Lemma \ref{betanottoolarge} we see that $\beta= q+1$. Then Lemma \ref{atleastqplus1}(b) shows that $x=(q+1)^2$ and that every line and solid is incident with either 0 or $q+1$ planes of $\LL$.
\end{proof}

The previous Lemma describes the situation of (b) in Theorem \ref{THW5q}. This finishes the proof of Theorem \ref{THW5q}.

\begin{remark}
Suppose that $x=(q+1)^2$ and that every line and solid is incident with at most $x$ planes of $\LL$. We have seen that this implies that every line and every solid is incident with either $0$ or $q+1$ planes of $\LL$. Suppose for every solid with $q+1$ planes of $\LL$ that these $q+1$ planes contain a common line. Then we will show that $\LL$ is the  set of planes of an embedded symplectic polar space $W(5,q)$. We were not able to prove this without the extra condition on how the planes contained in a solid intersect. We present this result as a theorem below.
\end{remark}

\begin{Th}
Suppose that $x=(q+1)^2$ and that every line and solid is incident with at most $x$ planes of $\LL$. Suppose for every solid with $q+1$ planes of $\LL$ that these $q+1$ planes contain a common line. Then  $\LL$ is the  set of planes of an embedded symplectic polar space $W(5,q)$.
\end{Th}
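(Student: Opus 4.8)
The plan is to reconstruct the symplectic form geometrically, namely to build a symplectic polarity of $\PG(5,q)$ whose totally isotropic planes are exactly the planes of $\LL$. Call a line \emph{good} if it is incident with $q+1$ planes of $\LL$ (recall every line and solid is incident with $0$ or $q+1$ planes of $\LL$). First I would record two elementary facts. Every line contained in a plane of $\LL$ is good, since such a line lies on at least one, hence on $q+1$, planes of $\LL$; consequently every point lies on good lines. Counting flags (good line through $P$, plane of $\LL$ on it) then gives that each point $P$ lies on exactly $(q+1)(q^2+1)=\theta_3$ good lines, in accordance with the $(q+1)(q^2+1)$ planes of $\LL$ through $P$ furnished by Lemma~\ref{incidenceswithpointsandhyperplanes_special_n5}. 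The object to analyse is the point--line geometry $\mathcal S$ whose points are all points of $\PG(5,q)$ and whose lines are the good lines.

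Next I would turn the hypothesis into a self-dual correspondence. A solid $S$ with $q+1$ planes of $\LL$ has, by assumption, a line $\ell_S$ common to all of them; as all $q+1$ planes of $S$ on $\ell_S$ then lie in $\LL$, the line $\ell_S$ is good and \emph{all} of its planes of $\LL$ lie in $S$. Conversely, given a good line $\ell$ and two of its planes of $\LL$, their span is a solid carrying $q+1$ planes of $\LL$, whose common line must be $\ell$; hence all $q+1$ planes of $\LL$ on $\ell$ span a single solid $S(\ell)$. This produces a bijection $\ell\leftrightarrow S(\ell)$ between good lines and good solids with $\ell\subseteq S(\ell)$ and $\ell_{S(\ell)}=\ell$, the combinatorial shadow of $\ell\mapsto\ell^{\perp}$.

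The heart of the proof is the Buekenhout--Shult ``one or all'' axiom for $\mathcal S$: for a point $P$ and a good line $L$ with $P\notin L$, the point $P$ is joined by a good line to exactly one, or to all, points of $L$. Putting $\pi:=\langle P,L\rangle$, the relevant points of $L$ correspond to the good lines of $\pi$ through $P$. If $\pi\in\LL$ all its lines are good and the count is $q+1$. If $\pi\notin\LL$, a short count shows $\pi$ carries exactly $q+1$ good lines: $\pi$ meets $x=(q+1)^2$ planes of $\LL$ in a line, each meeting $\pi$ in a good line, and each good line of $\pi$ accounts for exactly $q+1$ of them because $\pi\notin\LL$. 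A set of $q+1$ lines of $\pi\cong\PG(2,q)$ satisfies the one-or-all condition at every point if and only if it is a pencil, since a $(q+1)$-set meeting every line in exactly one point is impossible ($(q+1)^2\neq\theta_2$). Thus the axiom is \emph{equivalent} to the Concurrency Lemma, which I expect to be the main obstacle: \emph{for every plane $\pi\notin\LL$ containing a good line, its $q+1$ good lines are concurrent.} To prove it I would argue by contradiction, using the good-solid correspondence with the valency data of Lemmas~\ref{z1z2} and~\ref{basiccountingargument}: assuming three good lines of $\pi$ form a triangle, I would first pin down how the good solids $S(L_i)$ meet $\pi$ (aiming at $S(L_i)\cap\pi=L_i$) and then double count, over the vertices of the triangle and the planes of $\LL$ through the $L_i$, the planes of $\LL$ meeting $\pi$ in a line; the rigid value $x=(q+1)^2$ and the bound $\beta=q+1$ should force the triangle to degenerate, much as the concurrent-versus-triangle split is handled in Lemma~\ref{gammaupper}. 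This local counting is where the genuine difficulty lies.

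Once concurrency is in hand, $\mathcal S$ satisfies the one-or-all axiom, has thick lines, and has every point of $\PG(5,q)$ among its points, so it is a non-degenerate polar space fully embedded in $\PG(5,q)$; by the classification of embedded polar spaces it is the symplectic space $W(5,q)$ of a symplectic polarity, whose good lines are its totally isotropic lines. It remains to identify $\LL$ with the generators, i.e.\ with the planes all of whose lines are good: a plane with all lines good but not in $\LL$ would meet $\theta_2(q+1)\neq(q+1)^2=x$ planes of $\LL$ in a line, a contradiction, while every plane of $\LL$ has all its lines good. A plane is totally isotropic precisely when all its lines are good, so $\LL$ is exactly the set of totally isotropic planes of $W(5,q)$ (the cardinalities $(q+1)^2(q^2+1)(q^2-q+1)=(q+1)(q^2+1)(q^3+1)$ match), completing the proof.
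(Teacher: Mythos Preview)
Your strategy coincides with the paper's: build the point--good-line geometry, verify the Buekenhout--Shult one-or-all axiom, and identify the resulting polar space with $W(5,q)$. Your setup is correct: the bijection $\ell\leftrightarrow S(\ell)$ between good lines and good solids, the count of $\theta_3$ good lines through each point, the observation that a non-$\LL$ plane with a good line carries exactly $q+1$ good lines, and the final identification of $\LL$ with the totally isotropic planes all go through as you describe.

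The gap is exactly where you locate it: the Concurrency Lemma. Your reduction of one-or-all to concurrency is valid, but the sketch you offer does not close. The double count of the $(q+1)^2$ planes of $\LL$ meeting $\pi$ in a line is insensitive to the configuration of the $q+1$ good lines, since each good line carries exactly $q+1$ such planes regardless of whether the good lines form a pencil or contain a triangle; so the ``rigid value $x=(q+1)^2$'' by itself gives no leverage. The analogy with Lemma~\ref{gammaupper} is misleading: that lemma produces a strict inequality because there is slack between a hypothetical $\beta$ and the bound, whereas here $\beta=q+1$ and $x=(q+1)^2$ are already exact. Pinning down $S(L_i)\cap\pi=L_i$ is true (since $\pi\subset S(L_i)$ would force $\pi\in\LL$), but it is not clear how to turn this into a contradiction by local counting in $\pi$ alone.

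The paper avoids the Concurrency Lemma entirely by a global argument. Given a good line $\ell$ and $P\notin\ell$, one first observes (as you did) that the $q+1$ planes of $\LL$ on $\ell$ fill a solid $\ell^{\perp}$; if $P\in\ell^{\perp}$ one is in the ``all'' case. For $P\notin\ell^{\perp}$, pick any $\pi\in\LL$ through $\ell$; the solid $\langle P,\pi\rangle$ carries $q+1$ planes of $\LL$ on a common line $\ell'\neq\ell$, one of which contains $P$, producing a good line through $P$ meeting $\ell$. For uniqueness one counts globally: each point of $\ell$ lies on $\theta_3$ good lines, of which exactly $\theta_2$ lie in $\ell^{\perp}$, so exactly $(q+1)(\theta_3-\theta_2)$ good lines outside $\ell^{\perp}$ meet $\ell$; their $q(q+1)(\theta_3-\theta_2)=\theta_5-\theta_3$ off-$\ell$ points can cover the $\theta_5-\theta_3$ points outside $\ell^{\perp}$ at most once each. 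Since every such point is covered at least once, it is covered exactly once, and one-or-all follows without ever analysing the good lines inside a single non-$\LL$ plane.
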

\begin{proof}
First note that the condition on $x$ implies that every line and every solid is incident with $0$ or $q+1$ elements of $\LL$.

Now consider the point-line geometry $\WW$ with point set the points of PG$(5,q)$ and line set the lines that are contained in a plane of $\LL$.

We will show that $\WW$  satisfies the one-or-all axiom. Let $P$ be any point and $\ell$ any line of $\WW$ not through $P$. If $P$ and $\ell$ are incident with a common element of $\LL$, then obviously $P$ is collinear with all points of $\ell$ in $\WW$. So suppose that no such element of $\LL$ exists. Consider any plane $\pi$ of $\LL$ through $\ell$, then the solid spanned by $\pi$ and $P$ must contain exactly $q+1$ planes of $\LL$ on a common line $\ell'$ distinct from $\ell$. Exactly one of these planes will contain $P$, and hence gives rise to a line of $\WW$ through $P$ intersecting $\ell$. So there is at least one point on $\ell$ collinear with $P$. Since $\ell$ is contained in a plane of $\LL$, it is contained in exactly $q+1$ planes of $\LL$. The solid spanned by any two of these planes then must contain exactly $q+1$ planes on a common line, which must be $\ell$. Hence the $q+1$ planes on $\ell$ are contained in a solid. Denote by $\ell^\perp$ the unique solid containing the $q+1$ planes of $\LL$ on $\ell$. Because every point is contained in $\theta_3$ planes of $\LL$, and every line of $\WW$ is contained in $q+1$ planes of $\LL$, we see that every point is incident with $\theta_3$ lines of $\WW$. Hence exactly $(q+1)(\theta_3-\theta_2)$ lines of $\WW$ not contained in $\ell^\perp$ intersect $\ell$ in a point. These lines cover at most $q (q+1)(\theta_3-\theta_2)=\theta_5-\theta_3$ points not contained in $\ell^\perp$. However, by the previous argument, each of the $\theta_5-\theta_3$ points not in $\ell^\perp$ are contained in at least one of these lines. Hence the point $P$ is on a unique line of $\WW$ that intersects $\ell$. We conclude that $\WW$ satisfies the one-or-all axiom. It now easily follows that $\WW$ is an embedded symplectic polar space $W(5,q)$, and that $\LL$ is the set of planes of this polar space.
\end{proof}

\section{The case $n=7$}\label{Sectionnis7}

In this section $\LL$ denotes an intriguing set of planes of $PG(7,q)$ corresponding to the second non-trivial eigenvalue $\lambda_2=q^5+q^4+q^3-q-1$ of the Grassmann graph. Hence $$|\LL|=x(q^2+1)(q^4+1)(q^2-q+1)$$ for some integer $x$. We shall show in this section that whenever $q\ge 13$ then $x\ge (q+1)(q^2+q+1)$ with equality only if $\LL$ is based on a line spread as in Example \ref{ex2}.

Recall that every plane of $\LL$ meets $\lambda_2+x$ planes of $\LL$ in a line and that every plane not in $\LL$ meets $x$ planes of $\LL$ in a line. We call a line of $\PG(7,q)$ a \textsl{full line}, if all planes on that line belong to $\LL$. Throughout we assume that $$x\le (q+1)(q^2+q+1).$$

\begin{Le}\label{fulllinesareskew}
(a) The full lines are mutually skew.

(b) A line that is not a full line, lies on at most $x$ planes of $\LL$.
\end{Le}
\begin{proof}
(a) As each full line lies in $\theta_5$ planes of $\LL$, a plane with two full lines would meet at least $2(\theta_5-1)$ planes of $\LL$ in one of these two lines. Since $2(\theta_5-1)>\lambda_2+x$, this is impossible. Hence, no plane contains two full lines.

(b) If $\pi$ is a plane on a line $\ell$ with $\pi\notin \LL$, then $\pi$ meets exactly $x$ planes of $\LL$ in a line and hence $\ell$ can be in at most this many planes of $\LL$.
\end{proof}

{\bf Notation}. If a line $\ell$ is not a full line, then we denote by $x-\delta_\ell$ the number of planes of $\LL$ that contain $\ell$.

\begin{Le}\label{deltasum}
Let $\pi$ be a plane of $\LL$ and suppose that no line of $\pi$ is a full line. Then
\[
\sum_g\delta_g=(q^2+q)x-q^2\theta_3\le q(q+1)(2q^2+q+1)
\]
where the sum is over all lines $g$ of $\pi$. In particular $x\ge q^3+q$.
\end{Le}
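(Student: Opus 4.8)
The plan is to double-count the planes of $\LL$ that meet the fixed plane $\pi$ in a line, organizing the count by which line of $\pi$ they contain. Since $\pi$ is a plane, it has exactly $\theta_2=q^2+q+1$ lines, and any plane $\tau\neq\pi$ that meets $\pi$ in a line meets it in a \emph{unique} line $g$ of $\pi$ (if $\tau\cap\pi$ contained two lines of $\pi$, then $\tau=\pi$). So the planes of $\LL$ meeting $\pi$ in a line partition according to the line $g$ of $\pi$ they contain. The key bookkeeping step is that, because no line of $\pi$ is full, every line $g$ of $\pi$ lies on exactly $x-\delta_g$ planes of $\LL$ by definition of $\delta_g$, and one of these is $\pi$ itself; hence exactly $x-\delta_g-1$ planes of $\LL$ meet $\pi$ precisely in $g$. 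Summing over the $\theta_2$ lines of $\pi$ and using that $\pi\in\LL$ meets $\lambda_2+x$ planes of $\LL$ in a line gives
\[
\sum_g(x-\delta_g-1)=\lambda_2+x,\qquad\text{i.e.}\qquad \theta_2(x-1)-\sum_g\delta_g=\lambda_2+x .
\]
Solving for the sum yields $\sum_g\delta_g=(\theta_2-1)x-\theta_2-\lambda_2$.

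It then remains to simplify the constants, which is where the clean form of the statement comes from. Using $\theta_2-1=q^2+q$ and the identity $\theta_2+\lambda_2=(q^2+q+1)+(q^5+q^4+q^3-q-1)=q^5+q^4+q^3+q^2=q^2\theta_3$ immediately gives the claimed equality $\sum_g\delta_g=(q^2+q)x-q^2\theta_3$. For the upper bound I would substitute the standing assumption $x\le(q+1)(q^2+q+1)=(q+1)\theta_2$ into this expression; a direct expansion shows the right-hand side collapses to exactly $q(q+1)(2q^2+q+1)$, so the stated inequality is nothing more than the hypothesis on $x$ rewritten.

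For the final assertion $x\ge q^3+q$ I would argue from nonnegativity of the $\delta_g$. Since none of the lines of $\pi$ is full, Lemma \ref{fulllinesareskew}(b) bounds the number of planes of $\LL$ on each such line by $x$, forcing $\delta_g\ge0$ for every line $g$ of $\pi$, and hence $\sum_g\delta_g\ge0$. Combined with the exact formula this gives $(q^2+q)x\ge q^2\theta_3$, and using the factorization $\theta_3=(q+1)(q^2+1)$ one obtains $x\ge q\theta_3/(q+1)=q(q^2+1)=q^3+q$. I do not expect a genuine obstacle here: the content is a single incidence count, and the only points requiring care are that each qualifying plane is counted exactly once (which is why the hypothesis ``no line of $\pi$ is full'' is used to legitimize the $\delta_g$ bookkeeping on \emph{every} line of $\pi$) and the two arithmetic identities $\theta_2+\lambda_2=q^2\theta_3$ and $\theta_3=(q+1)(q^2+1)$ that make both bounds come out cleanly.
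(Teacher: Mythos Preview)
Your proof is correct and follows exactly the same approach as the paper: the paper's one-line argument is precisely your identity $\lambda_2+x=\sum_g(x-1-\delta_g)$, and you have simply filled in the arithmetic verifications (the identities $\theta_2+\lambda_2=q^2\theta_3$ and $\theta_3=(q+1)(q^2+1)$) and the justification $\delta_g\ge 0$ for the lower bound on $x$, all of which the paper leaves implicit.
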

\begin{proof}
Since $\pi$ meets $\lambda_2+x$ planes of $\LL$ in a line, we have $\lambda_2+x=\sum (x-1-\delta_g)$, where the sum is over the lines $g$ of $\pi$.
\end{proof}

\begin{Le}\label{graph}
Suppose that $(S\cup T,E)$ is a bipartite graph with $|S|=|T|=q$ and $c$ is its matching number. Then the graph has at most $cq$ edges.
\end{Le}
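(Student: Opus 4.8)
The plan is to reduce the edge count to a vertex-cover estimate via the König--Egerv\'{a}ry theorem. Since the graph $(S\cup T,E)$ is bipartite, König's theorem tells us that the matching number $c$ equals the minimum size of a vertex cover. Let $C$ be such a minimum vertex cover, so $|C|=c$ and every edge of $E$ is incident with at least one vertex of $C$. This is the only nontrivial input; everything else is a direct degree count.

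Next I would observe that every vertex of the graph has degree at most $q$. Indeed, because the graph is bipartite, a vertex of $S$ is joined only to vertices of $T$, and $|T|=q$; symmetrically a vertex of $T$ has all its neighbours among the $q$ vertices of $S$. Hence each vertex covers at most $q$ edges.

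Combining the two observations finishes the argument: since $C$ covers all edges and consists of $c$ vertices each of degree at most $q$, we obtain
\[
|E|\le \sum_{v\in C}\deg(v)\le |C|\cdot q = cq,
\]
as claimed.

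I do not expect any genuine obstacle here, as the statement is a short consequence of König's theorem. The only point that needs care is to make sure the degree bound $\deg(v)\le q$ is applied correctly to both sides of the bipartition (it holds for vertices of $S$ and of $T$ alike precisely because $|S|=|T|=q$), and to invoke König's theorem in the form ``matching number equals minimum vertex cover number'' rather than any weaker bound such as one coming directly from a maximum matching, which would only yield $|E|\le 2cq$.
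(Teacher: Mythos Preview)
Your proof is correct. It differs from the paper's, though: the paper gives a self-contained elementary argument that avoids citing K\"onig's theorem. It fixes a maximum matching $M=\{\{v_i,w_i\}:1\le i\le c\}$, notes that every edge meets some $v_i$ or $w_i$, and then observes that for each $i$ one cannot simultaneously have an edge $\{v_i,w\}$ with $w\notin\{w_1,\dots,w_c\}$ and an edge $\{v,w_i\}$ with $v\notin\{v_1,\dots,v_c\}$ (else an augmenting path would enlarge $M$). Hence at most $q-c$ edges join $\{v_i,w_i\}$ to unmatched vertices, and together with at most $c^2$ edges inside the matched vertices this gives $c^2+c(q-c)=cq$.

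Your route via K\"onig--Egerv\'ary is shorter and cleaner: once you know a vertex cover of size $c$ exists, the degree bound $\deg(v)\le q$ finishes it in one line. The paper's approach trades brevity for self-containment; it is essentially reproving the relevant consequence of K\"onig by hand via a single augmenting-path step. Either argument is perfectly adequate for this lemma.
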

\begin{proof}
Let $M$ be a matching of the graph with $c$ edges and put $M=\{\{v_i,w_i\}\mid i=1,\dots,c\}$ with $v_i\in S$ and $w_i\in T$. Since this matching is maximal, every edge of the graph must contain one of the vertices $v_i$ or one of the vertices $w_i$. Consider $i\le c$. There do not exist two edges $\{v_i,w\}$ and $\{v,w_i\}$ with $v\notin V:=\{v_1,\dots,v_c\}$ and $w\notin W:=\{w_1,\dots,w_c\}$, since otherwise $M\setminus\{\{v_i,w_i\}\}$ together with these two edges would be a matching with $c+1$ edges. Hence, there exist at most $q-c$ edges that contain a vertex of $\{v_i,w_i\}$ and a vertex not in $V\cup W$. This shows that the graph has at most $c^2+c(q-c)=cq$ edges.
\end{proof}

\begin{Le}\label{twopoorgivemany}
Suppose $q\ge 4$. If two different planes $\pi_1$ and $\pi_2$ of $\LL$ meet in a line and if neither $\pi_1$ nor $\pi_2$ contains a full line, then the solid they span contains at  least $q^2(q+1)-(2q^2+2q+5)$ planes of $\LL$.
\end{Le}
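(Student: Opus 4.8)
The plan is to count the planes of the solid $S$ that avoid the line $\ell:=\pi_1\cap\pi_2$ and do \emph{not} lie in $\LL$, call this number $N'$, and then read off the bound on $\beta:=|\{\sigma\in\LL:\sigma\subseteq S\}|$. First I would record the parametrisation of the planes of $S$ not containing $\ell$: such a plane $\tau$ meets $\ell$ in a single point $P$, meets $\pi_1$ in a line $g_1\ni P$ with $g_1\neq\ell$, and meets $\pi_2$ in a line $g_2\ni P$ with $g_2\neq\ell$; conversely every triple $(P,g_1,g_2)$ of this shape gives a unique $\tau=\langle g_1,g_2\rangle$. This accounts for exactly $(q+1)\cdot q\cdot q=q^2(q+1)$ planes, so it suffices to prove $N'\le 2q^2+2q+5$, since every plane off $\ell$ that lies in $\LL$ contributes to $\beta$.

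The key local inequality uses that $\pi_1,\pi_2$ contain no full line, so $\delta_{g_1},\delta_{g_2}$ are defined. If $\tau=\langle g_1,g_2\rangle\notin\LL$, then $\tau$ meets exactly $x$ planes of $\LL$ in a line, and I would partition these $x$ planes by the line of $\tau$ in which they meet it. Exactly $x-\delta_{g_1}$ meet $\tau$ in $g_1$ (all planes of $\LL$ through $g_1$, none equal to $\tau$), exactly $x-\delta_{g_2}$ meet it in $g_2$, and, crucially, every one of the $\beta$ planes of $\LL$ inside $S$ meets $\tau$ in a line (distinct planes of a solid share a line, and $\tau\notin\LL$); of these only $b_{g_1}+b_{g_2}$ use the lines $g_1,g_2$, where $b_{g_i}\le q+1$ counts the planes of $\LL\cap S$ through $g_i$. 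These three families are disjoint, so
$$x\ \ge\ (x-\delta_{g_1})+(x-\delta_{g_2})+\bigl(\beta-b_{g_1}-b_{g_2}\bigr),$$
whence $\delta_{g_1}+\delta_{g_2}\ge x+\beta-2(q+1)$. Keeping the $+\beta$ term is the whole point: the naive bound $\delta_{g_1}+\delta_{g_2}\ge x$, obtained from $g_1,g_2$ alone, loses a factor two and falls short of the asserted constant.

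Next I would aggregate. A line of $\pi_i$ lies on only $q$ planes of $S$ besides $\pi_i$, so it lies in at most $q$ bad planes; at each $P\in\ell$ I form the bipartite graph on the $q$ lines of $\pi_1$ through $P$ other than $\ell$ and the $q$ lines of $\pi_2$ through $P$ other than $\ell$, joining $g_1$ to $g_2$ exactly when $\langle g_1,g_2\rangle\notin\LL$, so that by Lemma \ref{graph} the number of bad planes through $P$ is at most $c_Pq$, where $c_P$ is the matching number. Summing the local inequality over a maximum matching at each $P$ (the lines involved are then pairwise distinct, as distinct points of $\ell$ determine disjoint pencils in $\pi_1$ and in $\pi_2$) and applying Lemma \ref{deltasum} to $\pi_1$ and $\pi_2$ gives $(x+\beta-2(q+1))\sum_P c_P\le 2\bigl((q^2+q)x-q^2\theta_3\bigr)$. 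Combined with $N'\le q\sum_P c_P$ this leaves
$$N'\,\bigl(x+\beta-2(q+1)\bigr)\ \le\ 2q\bigl((q^2+q)x-q^2\theta_3\bigr).$$

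Finally I would close the loop with $\beta\ge q^2(q+1)-N'$. Substituting $x+\beta-2(q+1)\ge D_0-N'$ turns the last display into a quadratic inequality $N'^2-D_0N'+E\ge0$ with $D_0=x+(q+1)(q^2-2)$ and $E=2q\bigl((q^2+q)x-q^2\theta_3\bigr)$. Since the trivial bound $N'\le q^2(q+1)$ stays below the larger root for $q\ge4$, the inequality forces $N'$ below the smaller root, and estimating that root over the admissible range $q^3+q\le x\le(q+1)\theta_2$ (the lower bound is from Lemma \ref{deltasum}) yields $N'\le 2q^2+2q+5$; hence $\beta\ge q^2(q+1)-(2q^2+2q+5)$. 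I expect the two delicate points to be the sharp local inequality with its $+\beta$ term, which is what defeats the factor-two loss, and verifying that $N'$ genuinely lands on the small-root side of the quadratic throughout the range of $x$, rather than any one explicit computation.
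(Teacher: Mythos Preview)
Your proposal is correct and follows essentially the same route as the paper: the same parametrisation of planes off $\ell$ by triples $(P,g_1,g_2)$, the same bipartite-matching count at each point of $\ell$ via Lemma~\ref{graph}, the same local inequality $\delta_{g_1}+\delta_{g_2}\ge x+\beta-O(q)$ obtained from a bad plane, the same aggregation through Lemma~\ref{deltasum}, and the same quadratic endgame. The only cosmetic differences are that you track $N'$ where the paper tracks $y=q^2(q+1)-s$, and that your local constant is $2(q+1)$ rather than the paper's sharper $2q$ (since $\tau\notin\LL$ forces $b_{g_i}\le q$, which would recover the paper's constant); neither affects the final bound.
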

\begin{proof}
The planes $\pi_1$ and $\pi_2$ meet in a line $\ell$ and span a solid $S$. Let $s$ be the number of planes of $\LL$ that are contained in $S$. Let $M$ be the set consisting of all planes of $S$ that do not contain the line $\ell$.

Let $P$ be a point of $\ell$. Let $c$ be the largest number such that there exist planes $\tau_1,\dots,\tau_c\in M\setminus\LL$ on $P$ such $\tau_i\cap\pi_1\not=\tau_j\cap\pi_1$ and $\tau_i\cap\pi_2\not=\tau_j\cap\pi_2$ for all $i,j$ with $i\not=j$. Lemma \ref{graph} shows that the number of planes of $M\setminus\LL$ on $P$ is at most $qc$. Hence $P$ lies in at least $q(q-c)$ planes of $M\cap\LL$.

Suppose that $c\ge 1$ and consider an index $i$ with $1\le i\le c$. The plane $\tau_i$ meets exactly $x$ planes of $\LL$ in a line and $s$ of these planes lie in $S$. On the other hand, the line $g_1:=\tau_i\cap \pi_1$ lies on $x-\delta_{g_1}$ planes of $\LL$ and at most $q$ of these lie in $S$. The same holds for the line $g_2:=\tau_i\cap\pi_2$. Hence the plane $\tau_i$ meets at least $s+(x-q-\delta_{g_1})+(x-q-\delta_{g_2})$ planes of $\LL$ in a line. It follows that
\[
s+(x-q-\delta_{g_1})+(x-q-\delta_{g_2})\le x\ \Rightarrow\ \delta_{g_1}+\delta_{g_2}\ge s+x-2q.
\]
If $P_0,\dots,P_q$ are the points of $\ell$, and if we define $c_i$ for $P_i$ as we defined $c$ for $P$, then we have shown that at least $\sum_{i=0}^qq(q-c_i)$ of the $q^3+q^2$ planes of $M$ are planes of $\LL$ and that the sum $\sum\delta_g$ over all lines $g$ of $\pi_1$ and of $\pi_2$ with $g\not=\ell$ is at least $\sum_{i=0}^qc_i(s+x-2q)$. Lemma \ref{deltasum} therefore gives
\[
\sum_{i=0}^qc_i(s+x-2q)\le 2(\theta_2-1)x-2q^2\theta_3.
\]
We also know that
\[
s\ge \sum_{i=0}^qq(q-c_i)=(q+1)q^2-q\sum_{i=0}^qc_i.
\]
If we put $s=q^2(q+1)-y$, then $q\sum c_i\ge y$ and hence
\[
y(q^2(q+1)-y+x-2q)\le 2q(\theta_2-1)x-2q^3\theta_3.
\]
As $y\le q^2(q+1)$, this remains true, if we replace $x$ by its upper bound $(q+1)\theta_2$. Simplifying the resulting expression gives
\[
y(2q^3+3q^2+1-y)\le 2q^2(q+1)(2q^2+q+1).
\]
For $q\ge 4$, it follows easily that $y<2q^2+2q+5$.\end{proof}

\begin{Le}\label{deltag}
Let $S$ be a solid and let $s$ be the number of planes of $\LL$ that are contained in $S$.
\begin{enumerate}
\renewcommand{\labelenumi}{\rm(\alph{enumi})}
\item If $g$ is a line of $S$ such that some plane of $S$ on $g$ does not belong to $\LL$, then $\delta_g\ge s-q$.
\item If $g$ is a line of $S$ that is not full but meets a full line of $S$, then $\delta_g\ge s+q^2$.
\end{enumerate}
\end{Le}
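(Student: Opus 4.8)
My plan is to prove both parts by one device: fix a plane $\pi_0$, partition the planes of $\LL$ that meet $\pi_0$ in a line according to which line of $\pi_0$ is the intersection, and read off $\delta_g$ from the term attached to $g$. I use that a plane of $\LL$ meets exactly $\lambda_2+x$ planes of $\LL$ in a line and a plane not in $\LL$ exactly $x$, that two distinct planes of the solid $S$ always meet in a line, and that a line of $S$ lies on exactly $q+1$ planes of $S$.

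For part (a) I would take $\pi_0=\tau$ to be a plane of $S$ through $g$ with $\tau\notin\LL$; such a plane exists by hypothesis. Every plane of $\LL\cap S$ is different from $\tau$ and hence meets $\tau$ in a line, and it meets $\tau$ in $g$ precisely when it contains $g$. As $\tau$ is one of the $q+1$ planes of $S$ on $g$ and $\tau\notin\LL$, at most $q$ planes of $\LL\cap S$ contain $g$, so at least $s-q$ planes of $\LL\cap S$ meet $\tau$ in a line other than $g$. On the other hand, since $\tau\notin\LL$ meets exactly $x$ planes of $\LL$ in a line and the $x-\delta_g$ planes of $\LL$ through $g$ are exactly those meeting $\tau$ in $g$, precisely $\delta_g$ planes of $\LL$ meet $\tau$ in a line other than $g$; the planes just found are among them, giving $\delta_g\ge s-q$.

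For part (b) the full line is brought in by taking $\pi_0=\rho:=\erz{f,g}$: since $f$ is full we have $\rho\in\LL$, and because coplanar lines meet while full lines are mutually skew, $f$ is the unique full line of $\rho$. Splitting the $\lambda_2+x$ planes of $\LL$ meeting $\rho$ in a line over the lines of $\rho$, the line $f$ alone accounts for the $\theta_5-1$ planes on $f$ other than $\rho$, the line $g$ for $x-\delta_g-1$, and each further line $\ell$ for $|\{\pi\in\LL:\ell\subseteq\pi\}|-1\ge 0$. Using $\theta_5-\lambda_2=q^2+2q+2$ this rearranges to
\[
\delta_g=q^2+2q+\Sigma,\qquad \Sigma\ge 0,
\]
where $\Sigma$ is the number of planes of $\LL$, other than $\rho$, meeting $\rho$ in a line different from $f$ and $g$. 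To bound $\Sigma$ below I count only inside $S$: among the $s-1$ planes of $\LL\cap S$ other than $\rho$, exactly $q$ meet $\rho$ in $f$ (the remaining planes of $S$ on the full line $f$, all in $\LL$) and exactly $b-1$ meet $\rho$ in $g$, where $b$ is the number of planes of $S$ through $g$ in $\LL$; hence $\Sigma\ge s-q-b$ and $\delta_g\ge s+q^2+(q-b)$. This already yields $\delta_g\ge s+q^2$ whenever $b\le q$, that is, whenever some plane of $S$ through $g$ lies outside $\LL$.

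The one remaining case, $b=q+1$ (every plane of $S$ through $g$ lies in $\LL$), is where I expect the real difficulty: there the in-$S$ count is short by exactly one and only gives $\delta_g\ge s+q^2-1$, so I must still exhibit a single plane of $\LL$ outside $S$ that meets $\rho$ in a line other than $f$ and $g$. Since $g$ is not full there is a plane $\pi^\ast\supseteq g$ with $\pi^\ast\notin\LL$ and $\pi^\ast\not\subseteq S$; the plan is to work in the second solid $S^\ast:=\erz{\pi^\ast,f}$, which meets $S$ exactly in $\rho$, and to locate inside $S^\ast$ a plane of $\LL$ off $f$ and $g$ (every plane of $\LL\cap S^\ast$ other than $\rho$ lies outside $S$), or else to derive a contradiction from all $q+1$ planes of $S$ on $g$ lying in $\LL$. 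Producing this last plane, and thereby closing the gap of one, is the crux of the argument.
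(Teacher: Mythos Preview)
Your argument for part~(a) is correct and is exactly the paper's argument.

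For part~(b) you also follow the paper's route: among the $\lambda_2+x$ planes of $\LL$ meeting $\rho=\erz{f,g}$ in a line, count the three pairwise disjoint families ``on $f$ outside $S$'', ``in $S$ other than $\rho$'', and ``on $g$ outside $S$''. This yields $\delta_g\ge s+q^2+q-c$, where $c$ is the number of planes of $\LL\cap S$ through $g$; the paper does exactly this (with $h,\pi,c$ in place of your $f,\rho,b$). You are right that only $c\le q+1$ is available in general, so the honest conclusion from this count is $\delta_g\ge s+q^2-1$. In fact you have spotted a slip in the paper's write-up: its displayed inequality carries $s$ where the preceding sentence (``Another $s-1$ of these lie in $S$'') demands $s-1$, and it is precisely this dropped $-1$ that lets the paper conclude $\delta_g\ge s+q^2$ from $c\le q+1$.

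Your plan to recover the missing unit via a second solid $S^\ast=\erz{\pi^\ast,f}$ is natural but does not close on its own: producing a plane of $\LL$ in $S^\ast$ off both $f$ and $g$ requires the number of $\LL$-planes in $S^\ast$ to exceed $q+b^\ast$ (in the obvious notation), and nothing in the hypotheses forces this. Rather than chase that unit, observe that part~(b) is invoked only in Lemma~\ref{poorandthreefull}, where using $\delta_g\ge s+q^2-1$ instead of $\delta_g\ge s+q^2$ merely shifts the resulting bound on $s$ by one; since Section~\ref{Sectionnis7} is ultimately applied only for $q\ge 13$, this is harmless. So your proof of (a) is complete, your proof of (b) reproduces the paper's argument with correct bookkeeping, and the residual $+1$ you worried about is a defect of the lemma as stated rather than of your reasoning.
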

\begin{proof}
(a) Let $\pi$ be a plane of $S$ on $g$ with $\pi\notin \LL$. Let $c$ be the number of planes of $\LL$ that lie in $S$ and contain $g$. The plane $\pi$ meets $x$ planes of $\LL$ in a line. Of these, exactly $s$ lie in $S$. Hence, $g$ lies on at most $x-s+c\le x-s+q$ planes of $\LL$. Therefore $\delta_g\ge s-q$.

(b) Let $h$ be a full line of $S$ meeting $g$ and let $\pi$ be the plane spanned by $g$ and $h$. Let $c$ be the number of planes of $\LL$ that lie in $S$ and contain $g$. As $\pi$ contains a full line, then $\pi\in\LL$ and hence $\pi$ meets $\lambda_2+x$ planes of $\LL$ in a line. Exactly $q^5+q^4+q^3+q^2$ of these meet $S$ in the full line $h$. Another $s-1$ of these lie in $S$. Also $x-\delta_g$ planes of $\LL$ contain $g$ and $c$ of these are contained in $S$. Hence $q^5+q^4+q^3+q^2+s+(x-\delta_g-c)\le \lambda_2+x$. Since $c\le q+1$, this gives $\delta_g\ge s+q^2$.
\end{proof}

\begin{Le}\label{poorandthreefull}
If a solid contains at least three full lines and a plane of $\LL$ without a full line, then the solid contains at most $\frac13(2q^3+2q+1)$ planes of $\LL$.
\end{Le}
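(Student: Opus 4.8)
The plan is to exploit the three full lines to produce many lines of $\pi$ on which Lemma~\ref{deltag}(b) forces $\delta_g$ to be large, and then to contradict the global bound on $\sum_g\delta_g$ from Lemma~\ref{deltasum}. Write $S$ for the solid, $s$ for the number of planes of $\LL$ contained in $S$, let $h_1,h_2,h_3$ be three full lines of $S$, and let $\pi\in\LL$ be a plane of $S$ having no full line. First I would record the elementary incidence facts inside the projective $3$-space $S$: since each $h_i$ is full but $\pi$ has no full line, $h_i\not\subseteq\pi$, so $\pi$ meets $h_i$ in a single point $P_i$; and the three points $P_1,P_2,P_3$ are pairwise distinct, since $P_i=P_j$ would force the skew lines $h_i,h_j$ (skew by Lemma~\ref{fulllinesareskew}(a)) to meet.

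The key step is to count the lines of $\pi$ that pass through at least one $P_i$. Each $P_i$ lies on $q+1$ lines of $\pi$, and two such pencils share only the line $P_iP_j$; hence inclusion--exclusion gives exactly $3q$ such lines when $P_1,P_2,P_3$ are not collinear and $3q+1$ when they are. In either case there are at least $3q$ lines $g$ of $\pi$ meeting one of the full lines $h_i$. Every such $g$ lies in $\pi$, so it is not full, and it meets a full line of $S$; therefore Lemma~\ref{deltag}(b) yields $\delta_g\ge s+q^2$ for each of these at least $3q$ lines.

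To conclude I would feed this into Lemma~\ref{deltasum}. Since $\pi\in\LL$ has no full line, that lemma gives $\sum_{g\subset\pi}\delta_g=(q^2+q)x-q^2\theta_3\le q(q+1)(2q^2+q+1)$, where the sum runs over all lines of $\pi$. As every line that is not full lies on at most $x$ planes of $\LL$ (Lemma~\ref{fulllinesareskew}(b)), we have $\delta_g\ge 0$ for all $g$, so dropping the remaining lines and keeping only the $3q$ special ones gives
\[
3q\,(s+q^2)\le q(q+1)(2q^2+q+1).
\]
Dividing by $q$ and using $(q+1)(2q^2+q+1)=2q^3+3q^2+2q+1$ then yields $3s\le 2q^3+2q+1$, that is $s\le\frac13(2q^3+2q+1)$, as required.

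I expect the only delicate point to be the line count, and specifically making sure the estimate survives the collinear configuration of $P_1,P_2,P_3$: there one gains the extra transversal $P_1P_2P_3$, so the count rises to $3q+1$ rather than $3q$, which only strengthens the bound. The remaining verification is merely that the arithmetic matches the stated bound exactly, which it does: the upper bound $q(q+1)(2q^2+q+1)$ on $\sum_g\delta_g$ equals $3q\cdot q^2$ plus $3q\cdot\frac13(2q^3+2q+1)$, so equality forces precisely $3q$ contributing lines and the asserted value of $s$. No genuine case distinction is needed once one observes that at least $3q$ lines of $\pi$ meet a full line.
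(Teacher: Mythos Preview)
Your argument is correct and is exactly the paper's proof, only spelled out in more detail: the paper likewise obtains three distinct intersection points from Lemma~\ref{fulllinesareskew}, notes that at least $3q$ lines of $\pi$ pass through one of them, applies Lemma~\ref{deltag}(b) to each, and plugs $3q(s+q^2)$ into the bound of Lemma~\ref{deltasum}. Your inclusion--exclusion discussion and the observation that $\delta_g\ge0$ for the remaining lines are the only additions, and they are harmless elaborations.
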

\begin{proof}
Let $s$ be the number of planes of $\LL$ that lie in $S$. Consider a plane $\pi\in \LL$ that lies in $S$ and has no full line. As full lines are mutually disjoint (Lemma \ref{fulllinesareskew}), the three full lines meet $\pi$ in three different points and thus $\pi$ contains at least $3q$ lines $g$ containing one of these points. For these lines $g$ we have $\delta_g\ge s+q^2$ by Lemma \ref{deltag}. Therefore Lemma \ref{deltasum} shows that $3q(s+q^2)\le q(q+1)(2q^2+q+1)$. Solving for $s$ proves the assertion.
\end{proof}

\begin{Le}\label{richsolid}
Suppose that $q\ge 4$. Let $S$ be a solid, let $M$ be the set consisting of all planes of $S$ that lie in $\LL$ but do not contain a full line. If $|M|\ge 2$, then $|M|\ge q^3-q^2-4q-7$.
\end{Le}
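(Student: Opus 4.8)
The plan is to estimate the number $s$ of planes of $\LL$ contained in $S$ and to split these planes according to whether or not they carry a full line. By Lemma~\ref{fulllinesareskew}(a) the full lines are mutually skew, so no plane contains two of them, while each full line of $S$ lies on exactly $q+1$ planes of $S$, all of which belong to $\LL$. Writing $f$ for the number of full lines contained in $S$, these remarks give the exact decomposition $s=|M|+f(q+1)$. Thus it suffices to bound $s$ from below and to prove that $f\le 2$.

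For the lower bound on $s$ I would use the hypothesis $|M|\ge 2$ to pick two distinct planes $\pi_1,\pi_2\in M$. Being distinct planes of the solid $S$ they meet in a line and span $S$, and by definition of $M$ neither of them contains a full line. Since $q\ge 4$, Lemma~\ref{twopoorgivemany} applies and gives that $S$ contains at least $q^2(q+1)-(2q^2+2q+5)=q^3-q^2-2q-5$ planes of $\LL$, that is $s\ge q^3-q^2-2q-5$.

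To bound $f$, suppose for contradiction that $f\ge 3$. Then $S$ carries three full lines, and because $M\ne\emptyset$ it also carries a plane of $\LL$ without a full line; hence Lemma~\ref{poorandthreefull} yields $s\le\frac13(2q^3+2q+1)$. Comparing the two estimates forces $q^3-q^2-2q-5\le\frac13(2q^3+2q+1)$, i.e.\ $q^3-3q^2-8q-16\le 0$, which fails for $q\ge 6$. So $f\le 2$, and the decomposition gives
\[
|M|=s-f(q+1)\ge (q^3-q^2-2q-5)-2(q+1)=q^3-q^2-4q-7,
\]
as claimed.

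The delicate point is precisely this last comparison, since the cubic $q^3-3q^2-8q-16$ is still negative at $q=4,5$ and only turns positive at $q=6$; there the crude exclusion of $f\ge 3$ is too weak. For those two values I would instead try to bound $f$ directly: fixing $\pi\in M$, each full line of $S$ meets $\pi$ in a distinct point, every line of $\pi$ through such a point is not full and satisfies $\delta_g\ge s+q^2$ by Lemma~\ref{deltag}(b), and there are at least $f(q+1)-\binom{f}{2}$ such lines; plugging these into the bound $\sum_{g\subset\pi}\delta_g\le q(q+1)(2q^2+q+1)$ of Lemma~\ref{deltasum} again caps $f$. This refinement unfortunately reduces to the same cubic, so the two extreme values seem to need a separate check. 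This causes no trouble for the paper, since the results of this section are ultimately only applied when $q\ge 13$.
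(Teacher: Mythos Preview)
Your argument is exactly the paper's: obtain $s\ge q^3-q^2-2q-5$ from Lemma~\ref{twopoorgivemany}, appeal to Lemma~\ref{poorandthreefull} to force at most two full lines in $S$, and then subtract $2(q+1)$. The explicit decomposition $s=|M|+f(q+1)$ you write down is implicit in the paper's ``hence $S$ has at most $2(q+1)$ planes that lie in $\LL$ and have a full line''.

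You have also correctly put your finger on a point the paper passes over in silence: excluding three full lines requires $q^3-q^2-2q-5>\tfrac13(2q^3+2q+1)$, equivalently $q^3-3q^2-8q-16>0$, and this indeed fails for $q\in\{4,5\}$ (it first holds for the prime power $q=7$). The paper's proof has the very same lacuna, and your proposed refinement via Lemma~\ref{deltag}(b) does reduce to the same comparison when $f=3$. As you note, none of this matters for the paper's results, since the lemma is only invoked downstream under the hypothesis $q\ge13$.
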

\begin{proof}
If $|M|\ge 2$, then Lemma \ref{twopoorgivemany} shows that $S$ contains at least $q^2(q+1)-(2q^2+2q+5)$ planes from $\LL$. Then Lemma \ref{poorandthreefull} shows that $S$ contains at most two full lines, and hence $S$ has at most $2(q+1)$ planes that lie in $\LL$ and have a full line. Thus $|M|\ge q^2(q+1)-(2q^2+2q+5)-2(q+1)$.
\end{proof}

{\bf Notation}: We call a solid a rich solid, if all but at most $2q^2+5q+8$ planes of $S$ are planes of $\LL$ without a full line. A $4$-space is called a rich $4$-space, if all but at most $4q^3+10q^2+17q-1$ of its solids are rich. A $5$-space is called a rich $5$-space, if all but at most $8q^4+20q^3+34q^2-q-1$ of its $4$-spaces are rich.

\begin{Le}\label{richfourspace}
Suppose that $q\ge 4$ and that $V$ is a $4$-space that contains at least two rich solids. Then $V$ is a rich $4$-space.
\end{Le}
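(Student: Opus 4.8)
The plan is to show that the non-rich solids of $V$ fall into three explicit families whose sizes add up to the required bound. I fix two rich solids $S_1,S_2$ of $V$ and set $\pi_0:=S_1\cap S_2$; since $S_1\ne S_2$ are distinct hyperplanes of the $4$-space $V$, the intersection $\pi_0$ is a plane. For $i=1,2$ let $M_i$ be the set of planes of $\LL$ contained in $S_i$ that have no full line, so that richness of $S_i$ means precisely that $S_i$ has at most $2q^2+5q+8$ planes outside $M_i$. The one nontrivial input I use is the local criterion extracted from Lemma \ref{richsolid}: if a solid contains two distinct planes of $\LL$ with no full line, then it contains at least $q^3-q^2-4q-7=\theta_3-(2q^2+5q+8)$ of them and is therefore rich.

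Next I would test an arbitrary non-rich solid $S$ against $S_1$ and $S_2$. Such an $S$ is neither $S_1$ nor $S_2$, as both are rich. If $S$ contains $\pi_0$ I set it aside. Otherwise the planes $\rho_1:=S\cap S_1$ and $\rho_2:=S\cap S_2$ are distinct, because $\rho_1=\rho_2$ would put this common plane inside $S_1\cap S_2=\pi_0$, forcing it to equal $\pi_0$ and hence $S\supseteq\pi_0$, contrary to assumption. Both $\rho_1,\rho_2$ lie in $S$, so if $\rho_1\in M_1$ and $\rho_2\in M_2$ then $S$ contains two distinct planes of $\LL$ without a full line and is rich by the criterion above. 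As $S$ is non-rich, either $\rho_1\notin M_1$ or $\rho_2\notin M_2$. Thus every non-rich solid either passes through $\pi_0$, or meets $S_1$ in a plane outside $M_1$, or meets $S_2$ in a plane outside $M_2$.

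It then remains to count the three families. There are exactly $q+1$ solids of $V$ through the plane $\pi_0$; two of these are $S_1$ and $S_2$, which are rich, so at most $q-1$ non-rich solids pass through $\pi_0$. For a fixed plane $\rho$ of $S_1$ the solids $S\ne S_1$ with $S\cap S_1=\rho$ are exactly the solids of $V$ through $\rho$ other than $S_1$, of which there are $q$; since $S_1$ is rich it has at most $2q^2+5q+8$ planes outside $M_1$, so at most $q(2q^2+5q+8)$ solids of $V$ meet $S_1$ in such a plane, and symmetrically for $S_2$. Summing the three bounds, the number of non-rich solids of $V$ is at most $(q-1)+2q(2q^2+5q+8)=4q^3+10q^2+17q-1$, exactly the threshold in the definition of a rich $4$-space.

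The whole argument is elementary once Lemma \ref{richsolid} is in hand; the single delicate point is pinning down the constant. The crude union bound yields $(q+1)+2q(2q^2+5q+8)$, which exceeds the target $4q^3+10q^2+17q-1$ by exactly $2$, and this discrepancy is removed precisely by the observation that the two rich solids $S_1$ and $S_2$ both pass through $\pi_0$ and so must be discarded from the family of solids through $\pi_0$. Ensuring the bound lands on $-1$ rather than $+1$ is the step where I expect the bookkeeping to matter most.
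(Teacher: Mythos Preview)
Your proof is correct and follows essentially the same approach as the paper's. The paper phrases the argument via a bijection between pairs $(\tau_1,\tau_2)$ of planes $\tau_i\ne\pi_0$ in $S_i$ sharing a line of $\pi_0$ and the solids of $V$ not through $\pi_0$, but this is exactly your map $S\mapsto(S\cap S_1,S\cap S_2)$; the counting and the use of Lemma~\ref{richsolid} are identical, and both arguments subtract the two known rich solids through $\pi_0$ to hit the constant $4q^3+10q^2+17q-1$ on the nose.
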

\begin{proof}
Let $S_1$ and $S_2$ be rich solids of $V$ and $\pi$ the plane in which they meet. There are $\theta_2q^2$ pairs $(\tau_1,\tau_2)$ with planes $\tau_1\not=\pi$ of $S_1$ and $\tau_2\not=\pi$ of $S_2$ such that the lines $\pi_1\cap \pi$ and $\pi_2\cap\pi$ are the same. Each of the $q^4+q^3+q^2$ solids of $V$ not containing $\pi$ is spanned by the two planes of such a pair. Also, if the two planes $\tau_1$ and $\tau_2$ are planes of $\LL$ without a full line, then the solid they span is a rich solid (Lemma \ref{richsolid}). Since all but at most $2q^2+5q+8$ planes and as many planes of $S_2$ do not have this property and each occurs in exactly $q$ pairs, it follows that at most $2(2q^2+5q+8)q$ of the solids of $V$ that do not contain $\pi$ are not rich. From the $q+1$ solids of $V$ that contain $\pi$ at least $S_1$ and $S_2$ are rich, so at most $q-1$ are not rich. Hence $V$ has at most $4q^3+10q^2+17q-1$ solids that are not rich.
\end{proof}

\begin{Le}\label{richfivespace}
Suppose that $q\ge 4$ and that $F$ is a $5$-space that contains at least two rich $4$-spaces. Then $F$ is a rich $5$-space.
\end{Le}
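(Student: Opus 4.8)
The plan is to mimic the proof of Lemma~\ref{richfourspace} one dimension higher, replacing the roles of (plane, solid, $4$-space) by (solid, $4$-space, $5$-space). First I would fix two rich $4$-spaces $V_1$ and $V_2$ of $F$; since $\dim V_1=\dim V_2=4$ inside the $5$-space $F$, they meet in a solid $S$. The key combinatorial device is to parametrise the $4$-spaces of $F$ not containing $S$ by pairs of solids. For each plane $\rho$ of $S$ there are exactly $q$ solids of $V_i$ other than $S$ meeting $S$ in $\rho$, so there are $\theta_3 q^2=q^5+q^4+q^3+q^2$ pairs $(\sigma_1,\sigma_2)$ with $\sigma_i$ a solid of $V_i$, $\sigma_i\neq S$, and $\sigma_1\cap S=\sigma_2\cap S$. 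On the other hand $F$ has $\theta_5-(q+1)=q^5+q^4+q^3+q^2$ four-spaces not containing $S$, and each such $4$-space $W$ gives rise to exactly one admissible pair via $\sigma_i=W\cap V_i$ (a solid, since $W\neq V_i$ forces $\dim(W\cap V_i)=3$), with $W=\langle\sigma_1,\sigma_2\rangle$. Thus this correspondence is a bijection.

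Next I would use the transfer of richness. If both $\sigma_1$ and $\sigma_2$ are rich solids, then they are two distinct rich solids of $W$ (distinct because $\sigma_1=\sigma_2$ would force $\sigma_1\subseteq V_1\cap V_2=S$, which is excluded), so Lemma~\ref{richfourspace} shows that $W$ is a rich $4$-space. Hence a $4$-space $W$ not containing $S$ can fail to be rich only if one of $\sigma_1,\sigma_2$ is a non-rich solid. Since $V_1$ is rich it has at most $4q^3+10q^2+17q-1$ non-rich solids, and each fixed non-rich solid $\sigma_1\neq S$ of $V_1$ occurs in exactly $q$ admissible pairs, namely through the $q$ solids of $V_2$ on $\sigma_1\cap S$ other than $S$; the same count holds for $V_2$. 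By the union bound, at most $2q(4q^3+10q^2+17q-1)=8q^4+20q^3+34q^2-2q$ of the $4$-spaces of $F$ not containing $S$ are not rich.

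Finally I would account for the $q+1$ four-spaces of $F$ that do contain $S$: among these at least $V_1$ and $V_2$ are rich, so at most $q-1$ are not rich. Adding the two contributions, the total number of non-rich $4$-spaces of $F$ is at most $8q^4+20q^3+34q^2-2q+(q-1)=8q^4+20q^3+34q^2-q-1$, which is exactly the bound in the definition of a rich $5$-space; hence $F$ is a rich $5$-space. I do not expect any genuine obstacle here, since the argument is a routine dimensional shift of Lemma~\ref{richfourspace}. The only points that need care are the dimension bookkeeping (that $W\cap V_i$ is a solid and that the pair-counting is a clean bijection) and the verification that the constant $8q^4+20q^3+34q^2-q-1$ produced by the count coincides exactly with the threshold fixed in the definition of a rich $5$-space.
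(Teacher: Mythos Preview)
Your proposal is correct and follows essentially the same approach as the paper: fix two rich $4$-spaces meeting in a solid $S$, note that the $q^5+q^4+q^3+q^2$ four-spaces of $F$ not containing $S$ correspond bijectively to pairs of solids $(\sigma_1,\sigma_2)$ with $\sigma_i\subset V_i$, $\sigma_i\neq S$ and $\sigma_1\cap S=\sigma_2\cap S$, and bound the non-rich ones by $2q(4q^3+10q^2+17q-1)$ plus the at most $q-1$ non-rich $4$-spaces on $S$. Your write-up is in fact more detailed than the paper's, which simply says ``this is a very similar argument'' and records the same numerical bounds.
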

\begin{proof}
This is a very similar argument. Consider two rich $4$-solids of $F$ and their intersection $S$. Then $q^5+q^4+q^3+q^2$ $4$-spaces of $F$ do not contain $S$ and at most $2(4q^3+10q^2+17q-1)q$ of these are not rich. Since $S$ lies in $q+1$ $4$-spaces of $F$ and two of these are rich, this proves the assertion.
\end{proof}

\begin{Le}\label{norich5space}
Suppose $q\ge 13$. Then there does not exist a rich $5$-space.
\end{Le}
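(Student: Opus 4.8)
The plan is to derive a contradiction from the existence of a rich $5$-space $F$ by estimating the number of planes of $\LL$ contained in $F$ in two ways. For the upper bound I would choose one of the $q+1$ hyperplanes $H$ of $\PG(7,q)$ with $F\subseteq H$. By Lemma~\ref{incidenceswithpointsandhyperplanes}(c) the hyperplane $H$ contains exactly $|\LL|\theta_4/\theta_7$ planes of $\LL$, and combining part~(a) of that lemma with the identity $\theta_5=\theta_2(q+1)(q^2-q+1)$ and the standing assumption $x\le (q+1)\theta_2$ gives
\[
|\LL|\frac{\theta_4}{\theta_7}=\frac{(q^2-q+1)\theta_4}{q+1}\,x\le (q^4+q^2+1)\theta_4 .
\]
Every plane of $\LL$ lying in $F$ also lies in $H$, so $F$ contains at most $(q^4+q^2+1)\theta_4$ planes of $\LL$.

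For the lower bound I would call a plane of $F$ \emph{exceptional} if it is not a plane of $\LL$ without a full line, and write $E$ for the number of exceptional planes of $F$. The remaining $\gauss{6}{3}-E=(q^3+1)(q^2+1)\theta_4-E$ planes of $F$ all belong to $\LL$, so the upper bound forces
\[
E\ge\bigl[(q^3+1)(q^2+1)-(q^4+q^2+1)\bigr]\theta_4=q^3(q^2-q+1)\theta_4 ,
\]
a quantity of order $q^9$. The whole argument then reduces to showing that a rich $5$-space cannot have this many exceptional planes.

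To bound $E$ from above I would count exceptional planes through the nested richness structure. Each plane of $F$ lies in $\theta_2$ solids; by definition a rich solid contains at most $2q^2+5q+8$ exceptional planes, while an arbitrary solid has at most $\theta_3$ planes, so with $N_3$ the number of non-rich solids of $F$,
\[
E\,\theta_2\le(\gauss{6}{4}-N_3)(2q^2+5q+8)+N_3\theta_3 .
\]
Each solid lies in $q+1$ of the $4$-spaces of $F$; by definition a rich $4$-space contains at most $4q^3+10q^2+17q-1$ non-rich solids and an arbitrary $4$-space at most $\theta_4$ solids, so with $N_4\le 8q^4+20q^3+34q^2-q-1$ the number of non-rich $4$-spaces of $F$ (the defining property of a rich $5$-space),
\[
(q+1)N_3\le(\theta_5-N_4)(4q^3+10q^2+17q-1)+N_4\theta_4 .
\]
Since $\gauss{6}{4}=(q^4+q^2+1)\theta_4$ and $\theta_5$ have orders $q^8$ and $q^5$, these two estimates yield $E=O(q^8)$, contradicting $E\ge q^3(q^2-q+1)\theta_4$.

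The one genuinely delicate point is quantitative. The richness thresholds roughly double and gain a factor $q$ at each step up in dimension (producing the leading constant $8$ for non-rich $4$-spaces), so a careless estimate of $E$ only beats $q^3(q^2-q+1)\theta_4$ for $q$ noticeably larger than $13$. To reach the stated threshold $q\ge 13$ one must retain every saving—most importantly counting the rich solids as $\gauss{6}{4}-N_3$ rather than as all $\gauss{6}{4}$ solids, so that the dominant contribution $N_3\theta_3$ is replaced by $N_3(\theta_3-2q^2-5q-8)$—and then verify the resulting degree-nine polynomial inequality in $q$ directly.
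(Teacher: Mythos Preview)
Your proposal is correct and is essentially the paper's argument viewed from the complementary side: the paper counts triples $(\pi,S,V)$ with $\pi\in\LL$ and $S,V$ rich to obtain $(\theta_5-A_4)(\theta_4-A_3)(\theta_3-A_2)\le\theta_2\theta_4\theta_5$, while you count exceptional planes via the cascaded bounds on $N_3$ and $N_4$; after using $\gauss{6}{4}(q+1)=\theta_4\theta_5$ and $q^3(q^2-q+1)(q+1)\theta_2=q^3\theta_5$, your inequalities collapse to exactly the same polynomial inequality. In particular your worry about squeezing the threshold down to $q\ge 13$ is unfounded---the careful version you outline reproduces the paper's bound, which already yields a contradiction for $q\ge 11$.
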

\begin{proof}
Suppose that $F$ is a rich $5$-space. We count triples $(\pi,S,V)$ of planes $\pi\in\LL$, rich solids $S$ and rich $4$-spaces $V$ with $\pi\subset S\subset V\subset F$ in two ways. By choosing first $V$, then $S$ and then $\pi$, Lemma \ref{richfivespace}, \ref{richfourspace} and \ref{richsolid} show that the number of these triples is at least
\[
(\theta_5-(8q^4+20q^3+34q^2-q-1)(\theta_4-(4q^3+10q^2+17q-1))(\theta_3-(2q^2+5q+8)).
\]
Lemma \ref{incidenceswithpointsandhyperplanes} shows that each hyperplane contains exactly $\frac{x\theta_4\theta_5}{(q+1)^2\theta_2}$ planes of $\LL$. Hence $F$ contains at most $\frac{x\theta_4\theta_5}{(q+1)^2\theta_2}$ planes of $\LL$. A plane of $F$ lies in $\theta_2$ solids of $F$ and a solid of $F$ lies in $q+1$ $4$-spaces of $F$. As $x\le (q+1)\theta_2$, it follows that the number of triples we are counting is at most $\theta_2\theta_4\theta_5$. Comparing the lower and upper bound for the number of triples gives
a contradiction for $q\ge 11$.
\end{proof}

We finally show that the absence of rich $5$-spaces implies that every plane of $\LL$ contains a full line.

\begin{Le}
If $q\ge 13$, then every plane of $\LL$ contains a full line.
\end{Le}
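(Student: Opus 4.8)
The plan is to argue by contradiction: I assume that some plane $\pi_0\in\LL$ contains no full line, and I aim to construct a rich $5$-space, which is impossible by Lemma~\ref{norich5space} (this is where $q\ge 13$ enters). The whole construction rests on the three ascending lemmas already proved: a solid containing at least two planes of $\LL$ without a full line is rich (Lemma~\ref{richsolid}), a $4$-space containing at least two rich solids is rich (Lemma~\ref{richfourspace}), and a $5$-space containing at least two rich $4$-spaces is rich (Lemma~\ref{richfivespace}). So it suffices to manufacture, starting from $\pi_0$, two rich $4$-spaces lying in a common $5$-space.

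The key counting step produces many planes of $\LL$ without a full line that meet $\pi_0$ in a line. The plane $\pi_0$ meets exactly $\lambda_2+x$ planes of $\LL$ in a line, and I claim that at most $\theta_2(q+1)$ of these contain a full line. Indeed, if such a plane $\tau$ has a full line $h$, then $h$ is different from the line $\pi_0\cap\tau$ (otherwise $\pi_0$ would contain the full line $h$), so $h$ meets $\pi_0$ in a single point $P$; since full lines are mutually skew (Lemma~\ref{fulllinesareskew}(a)), each of the $\theta_2$ points of $\pi_0$ carries at most one full line, and a fixed full line through $P$ lies in only $q+1$ planes meeting $\pi_0$ in a line (one for each line of $\pi_0$ through $P$). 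Hence at least $\lambda_2+x-\theta_2(q+1)$, which is of order $q^5$, planes of $\LL$ without a full line meet $\pi_0$ in a line.

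Each such plane $\tau$ spans with $\pi_0$ a solid that contains the two $\LL$-planes $\pi_0,\tau$ without a full line, so this solid is rich by Lemma~\ref{richsolid}. Since a solid contains at most $\theta_3-1$ planes meeting $\pi_0$ in a line, there are at least $(\lambda_2+x-\theta_2(q+1))/(\theta_3-1)$, that is, of order $q^2$, \emph{distinct} rich solids through $\pi_0$. I now pass to the quotient space $\PG(7,q)/\pi_0\cong\PG(4,q)$, in which the solids, $4$-spaces and $5$-spaces on $\pi_0$ become points, lines and planes respectively. The rich solids on $\pi_0$ thus form a set $A$ of order $q^2$ points of $\PG(4,q)$. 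Any line joining two points of $A$ corresponds to a $4$-space on $\pi_0$ that contains two rich solids, hence is rich by Lemma~\ref{richfourspace}.

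The delicate final point, and the step I expect to be the main obstacle, is that two rich $4$-spaces on $\pi_0$ need not lie in a common $5$-space: in the quotient this asks for two secant lines of $A$ to be coplanar, that is, to meet. I resolve this by finding a single point $a\in A$ lying on at least two secant lines of $A$; such a point exists because $|A|$ is of order $q^2$ and exceeds $q+1$, so the remaining points of $A$ cannot all lie on one line through $a$. Two secant lines through $a$ meet at $a$, hence are coplanar, so the two corresponding distinct rich $4$-spaces lie in a common $5$-space $F$; then $F$ is rich by Lemma~\ref{richfivespace}, contradicting Lemma~\ref{norich5space}. Therefore no plane of $\LL$ is without a full line.
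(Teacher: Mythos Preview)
Your argument is correct and uses the same toolkit (Lemmas~\ref{richsolid}--\ref{norich5space}) as the paper, but the pivot object is different. The paper first applies Lemma~\ref{deltasum} to the full-line-free plane $\pi$ to find a single \emph{line} $g\subseteq\pi$ lying on at least $q^3+q$ planes of $\LL$; since at most $q+1$ of those can contain a full line (full lines are skew and would have to pass through a point of $g$), one gets at least $q^3-1$ full-line-free $\LL$-planes through the fixed line $g$. Passing to the quotient by $g$, these become $\ge q^3-1>\theta_2$ points in $\PG(5,q)$, so four of them span a solid there; back in $\PG(7,q)$ this is a chain $\pi_i\subset\langle\pi_i,\pi_j\rangle\subset\langle\pi_i,\pi_j,\pi_k\rangle\subset F$ through which Lemmas~\ref{richsolid}, \ref{richfourspace}, \ref{richfivespace} cascade immediately to make $F$ rich.

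Your route instead counts all full-line-free $\LL$-planes meeting $\pi_0$ in a line (a count of order $q^5$, after your bound $\theta_2(q+1)$ on those carrying a full line), groups them into rich solids on $\pi_0$, and then works in the quotient $\PG(4,q)=\PG(7,q)/\pi_0$ with a set $A$ of rich-solid points satisfying $|A|>q+1$, which forces two secants through a common point and hence a rich $5$-space. The paper's line-pivot is a little cleaner numerically (the full-line subtraction is only $q+1$ rather than $\theta_2(q+1)$, and the final ``four points in general position'' step is immediate), while your plane-pivot avoids singling out a particular line of $\pi_0$ via Lemma~\ref{deltasum} and is arguably more conceptual; both reach the contradiction with room to spare for $q\ge 13$.
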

\begin{proof}
Assume on the contrary that there exists a plane $\pi\in\LL$ that does not contain a full line. Then every line $g$ of $\pi$ lies in $x-d_g$ planes of $\LL$ with $d_g\ge 0$ and Lemma \ref{deltasum} shows that $x\ge q^3+q$ and $\sum d_g\le (q^2+q)x-q^2\theta_3\le \theta_2(x-q^3-q)$. Hence there exists a line $g$ in $\pi$ with $d_g\le x-q^3-q$, that is $g$ lies on at least $q^3+q$ planes of $\LL$.

If $\tau$ is a plane of $\LL$ on $g$ that contains a full line $h$, then $h$ meets $g$ and $\tau$ is spanned by $g$ and $h$. Since the full lines are mutually skew, we see that at most $q+1$ of the planes on $g$ contain a full line. Hence $g$ lies on at least $q^3-1$ planes of $\LL$ that do not contain a full line. Since $q\ge 13$, then $q^3-1>q^2+q+1$ and hence these planes can not all be contained in a $4$-space on $g$, since such a $4$-space contains only $q^2+q+1$ planes on $g$.

Hence we find four planes $\pi_1,\pi_2,\pi_3,\pi_4\in\LL$ on $g$ that do not contain a full line and such that these four planes span a $5$-space $F$. It follows from Lemma \ref{richsolid} that any two of these planes span a rich solid. Therefore Lemma \ref{richfourspace} shows that any three of these four planes span a rich $4$-space. Then Lemma \ref{richfivespace} shows that $F$ is a rich $5$-space. This contradicts Lemma \ref{norich5space}.
\end{proof}

Lemma \ref{linespread} now finishes the proof of Theorem \ref{Main}

\section{Final Comments}

In this paper we constructed the first infinite family of intriguing sets that are not tight, and as the results indicate, the extremal examples are related to interesting geometric structures such as the symplectic polar space $W(5,q)$ and line spreads. However an abundance of questions remain:

\begin{itemize}
\item Are there examples of intriguing sets that are not tight in the Grassmann graph of planes of PG$(n,q)$ when $n$ is even?
\item Are there  examples of intriguing sets that are not tight in the Grassmann graph of $k$-spaces of PG$(n,q)$ when $k>2$?
\item More generally, what about infinite families of intriguing sets that are not tight in distance regular graphs?
\end{itemize}

\begin{section}*{Acknowledgement}
This material is based upon work that was done while Stefaan De Winter was serving at the National Science Foundation. Stefaan De Winter also thanks the Mathematisches Institut at Justus-Liebig-Universit\"{a}t for its warm hospitality during his research visit there.
\end{section}

\bibliographystyle{plain}

\bibliography{literatur}

\end{document}